\theoremstyle{plain}
\newtheorem{theorem}{Theorem}
\newtheorem{lemma}[theorem]{Lemma}
\newtheorem{proposition}[theorem]{Proposition}
\newtheorem{corollary}[theorem]{Corollary}
\numberwithin{theorem}{section}
\numberwithin{equation}{theorem}
\theoremstyle{definition}
\newtheorem{definition}[theorem]{Definition}
\newtheorem{example}[theorem]{Example}
\newtheorem{remark}[theorem]{Remark}
\newtheorem*{question*}{Question}
\DeclareMathOperator{\End}{End}
\DeclareMathOperator{\Hom}{Hom}
\DeclareMathOperator{\gldim}{gldim}
\DeclareMathOperator{\gr}{gr}
\DeclareMathOperator{\mcm}{mcm}
\DeclareMathOperator{\uExt}{{\underline{Ext}}}
\DeclareMathOperator{\uHom}{{\underline{Hom}}}
\DeclareMathOperator{\id}{injdim}
\def\kk{\mathbbm{k}}
\def\gr{\operatorname{gr}}
\def\tor{\operatorname{tors}}
\def\Gr{\operatorname{Gr}}
\def\qgr{\operatorname{qgr}}
\def\11{\mathbf{1}}
\begin{document}

\title[Generalized Kn\"{o}rrer's Periodicity Theorem]
{Generalized Kn\"{o}rrer's Periodicity Theorem}

\author{Ji-Wei He, Xin-Chao Ma and Yu Ye}

\address{He: Department of Mathematics,
Hangzhou Normal University,
Hangzhou Zhejiang 311121, China}
\email{jwhe@hznu.edu.cn}

\address{Ma: School of Mathematical Sciences, University of Science and Technology of China, Hefei Anhui 230026, China}
\email{mic@mail.ustc.edu.cn}

\address{Ye: School of Mathematical Sciences, University of Science and Technology of China, Hefei Anhui 230026, China\hfill \break
\indent\qquad CAS Wu Wen-Tsun Key Laboratory of Mathematics, University of Science and Technology of China, Hefei, Anhui, 230026, PR China}
\email{yeyu@ustc.edu.cn}

\begin{abstract}
Let $A$ be a noetherian Koszul Artin-Schelter regular algebra, and let $f\in A_2$ be a central regular element of $A$. The quotient algebra $A/(f)$ is usually called a (noncommutative) quadric hypersurface. In this paper, we use the Clifford deformation to study the quadric hypersurfaces obtained from the tensor products. We introduce a notion of simple graded isolated singularity and proved that, if $B/(g)$ is a simple graded isolated singularity of 0-type, then there is an equivalence of triangulated categories $\underline{\mcm}A/(f)\cong\underline{\mcm}(A\otimes B)/(f+g)$ of the stable categories of maximal Cohen-Macaulay modules. This result may be viewed as a generalization of Kn\"{o}rrer's periodicity theorem. As an application, we study the double branch cover $(A/(f))^\#=A[x]/(f+x^2)$ of a noncommutative conic $A/(f)$.
\end{abstract}

\subjclass[2010]{16S37, 16E65, 16G50}


\keywords{Noncommutative quadric hypersurface, Kn\"{o}rrer's Periodicity Theorem, double branched cover}


\maketitle


\setcounter{section}{-1}
\section{Introduction}
In noncommutative projective geometry, Artin-Schelter regular algebras are usually regarded as the coordinate rings of noncommutative projective spaces. Let $A$ be a noetherian Koszul Artin-Schelter regular algebra, and let $f\in A_2$ be a central regular element of $A$. The quotient algebra $A/(f)$ is usually called a {\it noncommutative quadric hypersurface}. Noncommutative quadric hypersurfaces have got lots of attentions in recent years (see  \cite{SvdB,CKMW,MU1,MU,HU,HMM,Ue,Ue2}, etc). To study the graded Cohen-Macaulay modules of $A/(f)$, Smith and Van den Bergh introduced in \cite{SvdB} a finite dimensional algebra $C(A/(f))$, and proved that there is an equivalence of triangulated categories
$$\underline{\mcm} A/(f)\cong D^b(\text{mod} C(A/(f))),$$
where $\underline{\mcm}A/(f)$ is the stable category of graded maximal Cohen-Macaulay modules over $A/(f)$ and $D^b(\text{mod}C(A/(f)))$ is the bounded derived category of finite dimensional modules over $C(A/(f))$. It is proved in \cite{HMM} that $C(A/(f))$ is a Morita invariant. Hence, in some sense, the Cohen-Macaulay representations of $A/(f)$ are determined by $C(A/(f))$.

However, one cannot recover the properties of $A/(f)$ completely from $C(A/(f))$. For instance, let $A=\mathbb C[x,y]$, $A'=\mathbb C_{-1}[x,y]$ and $f=x^2+y^2$. Then $C(A/(f))\cong C(A'/(f))$ and $\underline{\mcm}A/(f)\cong \underline{\mcm}A'/(f)\cong D^b(\text{mod}\mathbb C^{2})$. But if we consider the double branched covers of $A/(f)$ and $A'/(f)$ respectively, there will be a different situation. Indeed, $\underline{\mcm} (A/(f))^\#\cong D^b(\text{mod}\mathbb C)$ and $\underline{\mcm}(A'/(f))^\#\cong D^b(\text{mod}\mathbb C^4)$.

In \cite{HY}, we introduced the notion of {\it Clifford deformation} of a Koszul Frobenius algebra. Associated to every noncommutative quadric hypersurface $A/(f)$, there is a Clifford deformation $C_{A^!}(\theta_f)$, which is a strongly $\mathbb Z_2$-graded algebra. It turns out that the degree zero part of $C_{A^!}(\theta_f)$ is isomorphic to the finite dimensional algebra $C(A/(f))$ as introduced in \cite{SvdB}. The $\mathbb Z_2$-graded algebra $C_{A^!}(\theta_f)$ may recover enough information of $A/(f)$. For instance, the reason that $\underline{\mcm} (A/(f))^\#$ differs from $\underline{\mcm} (A'/(f))^\#$ in the previous paragraph, is because the $\mathbb Z_2$-graded algebra $C_{A^!}(\theta_f)$ associated to $A/(f)$ is a simple graded  algebra, while $C_{{A'}^!}(\theta_f)$ is not a simple graded algebra (see  Examples \ref{ex-s1}, \ref{ex-s2}).

In this paper, we use the Clifford deformation to study the quadric hypersurfaces obtained from the tensor products of Koszul Artin-Schelter regular algebras. Let $A$ and $B$ be Koszul Artin-Schelter regular algebras, and let $f\in A_2,g\in B_2$ be central regular elements. Suppose that $A\otimes B$ is noetherian. We prove that $(A\otimes B)/(f+g)$ is a graded isolated singularity provided both $A/(f)$ and $B/(g)$ are graded isolated singularities (see  Theorem \ref{thm-isolat}). We introduce
a notion of a {\it simple graded isolated singularity} in Section \ref{sec2} and then prove a generalized version of Kn\"{o}rrer's Periodicity Theorem (see  Theorem \ref{thm-k}). In particular, it will recover the classical Kn\"{o}rrer's Periodicity Theorem of quadric singularities (see  Remark \ref{rmk}). As an application, we study the double branch cover $(A/(f))^{\#}$ of a noncommutative conic $A/(f)$ classified in \cite{HMM} and proved that, in noncommutative case, $\underline{\mcm}(A/(f))^{\#}\cong \underline{\mcm}(A/(f))\times \underline{\mcm}(A/(f))$ (see  Corollary \ref{cor}).

We assume the ground field $\kk=\mathbb C$, and all the vector spaces and algebras are over $\mathbb C$.

\section{Morita theory of $\mathbb Z_2$-graded algebras revisited}
In this section, we will recall some Morita type properties of $\mathbb Z_2$-graded algebras.

Let $E=E_0\oplus E_1$ be a finite dimensional $\mathbb Z_2$-graded algebras.
Denote $\gr_{\mathbb Z_2} E$ to be the category whose objects are finite dimensional graded right $E$-modules, and whose hom-sets are denoted by $\Hom_{\gr_{\mathbb Z_2} E}(M,N)$ consisting of right $E$-module homomorphisms which preserve the gradings.  We use $\mathrm{mod}E$ to denote the category whose objects are all the finite dimensional right $E$-modules (ignoring the grading of $E$), and whose hom-sets are denoted by $\Hom_E(M,N)$ consisting of all the right $E$-module homomorphisms. Note that if $M$ is finite dimensional, then $\Hom_E(M,N)$ is a $\mathbb Z_2$-graded vector space.

We say that $E$ is {\it graded semisimple} if $E$ is a direct sum of simple objects in $\gr_{\mathbb Z_2} E$. Let $J^g(M)$ be the graded Jacobson radical of $M$, which is the intersection of all graded maximal submodules of $M$, it is clear that $J^g({}_EE)=J^g(E_E)$ is a two-sided nilpotent graded ideal and $E$ is graded semisimple if and only if $J^g(E)=0$.

A $\mathbb Z_2$-graded algebra $E$ is called a {\it graded division algebra} if each non-zero homogeneous element of $E$ is invertible.

Let $F$ be another finite dimensional $\mathbb Z_2$-graded algebra. Denote $E\hat\otimes F$ to be the twisted tensor product of $E$ and $F$. The multiplications of elements in $E\hat\otimes F$ is defined by $$(x\hat\otimes y)(x'\hat\otimes y')=(-1)^{|y||x'|}xx'\hat\otimes yy',$$  where $x,x'\in E$ and $y,y'\in F$ are homogeneous elements, and $|x'|$ denotes the degree of $x'$. Note that $E\hat\otimes F$ is also a $\mathbb Z_2$-graded algebra.

We say a $\mathbb Z_2$-graded algebra $E$ is {\it graded Morita equivalent} to $F$ if there is a finitely generated $\mathbb Z_2$-graded bimodule ${}_FP_E$ such that $\Hom_E(P,-):\gr_{\mathbb Z_2} E\longrightarrow \gr_{\mathbb Z_2} F$ is an equivalence of abelian categories.

The following results are well known (see  \cite[Lemmas 3.4, 3.10]{Z} and \cite[Theorem 2.10.10]{NV}).
\begin{lemma}\label{lem-z2} Let $E$ and $F$ be $\mathbb Z_2$-graded algebras.
\begin{itemize}
\item [(i)]Assume that $E$ and $F$ are graded Morita equivalent to $E'$ and $F'$ respectively. Then $E\hat\otimes F$ is graded Morita equivalent to $E'\hat\otimes F'$.
\item [(ii)]{\rm($\mathbb Z_2$-graded version of Wedderburn-Artin Theorem)} If $E$ is $\mathbb Z_2$-graded semisimple, then $E$ is isomorphic, as a graded algebra, to a direct product of finitely many matrix algebras over some division algebras.
\end{itemize}
\end{lemma}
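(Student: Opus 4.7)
The plan is to handle the two parts separately, adapting well-known ungraded arguments while keeping careful track of the Koszul sign rule built into $\hat\otimes$.

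For (i), let ${}_{E'}P_E$ and ${}_{F'}Q_F$ be the graded progenerators that realize the two given Morita equivalences. The natural candidate is the twisted tensor product $P\hat\otimes Q$, regarded as a $\mathbb Z_2$-graded $(E'\hat\otimes F',\ E\hat\otimes F)$-bimodule with left action $(x'\hat\otimes y')(p\hat\otimes q)=(-1)^{|y'||p|}x'p\hat\otimes y'q$ and the analogous twisted right action on homogeneous elements. The first step is to verify that this is a well-defined graded bimodule, which amounts to checking that the signs behave consistently with associativity and the commuting of the two actions. It then remains to show that $P\hat\otimes Q$ is a progenerator in $\gr_{\mathbb Z_2}(E\hat\otimes F)$ and that its graded endomorphism ring is graded isomorphic to $E'\hat\otimes F'$. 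Both reductions are consequences of the natural graded isomorphism
$$\Hom_{E\hat\otimes F}(P\hat\otimes Q,\ M\hat\otimes N)\cong \Hom_E(P,M)\,\hat\otimes\,\Hom_F(Q,N)$$
for graded modules, combined with the fact that projectivity, finite generation, and the generating property are each preserved by $\hat\otimes$.

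For (ii), the plan is to imitate the classical Wedderburn--Artin proof inside $\gr_{\mathbb Z_2}E$. Since $E$ is graded semisimple, one decomposes the regular right module into a finite direct sum of graded simples, groups those that are isomorphic in $\gr_{\mathbb Z_2}E$ to obtain $E\cong \bigoplus_{i=1}^{r} S_i^{n_i}$, and then invokes a graded Schur lemma: each $D_i:=\End_{\gr_{\mathbb Z_2}E}(S_i)$ is a $\mathbb Z_2$-graded division algebra, and standard identifications give $\End_{\gr_{\mathbb Z_2}E}(S_i^{n_i})\cong M_{n_i}(D_i)$ as $\mathbb Z_2$-graded algebras. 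The isomorphism $E\cong \End_{\gr_{\mathbb Z_2}E}(E_E)^{\op}$ then delivers the claimed decomposition as a direct product of matrix algebras over graded division algebras.

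The main obstacle in both parts is bookkeeping rather than conceptual content: in (i) one must check that every Morita-theoretic natural isomorphism survives the insertion of the Koszul signs from $\hat\otimes$, and in (ii) one must handle the degree-shift functor carefully when collecting isotypic components, since a graded simple and its shift by $1\in\mathbb Z_2$ need not be graded isomorphic. Once these sign- and shift-bookkeeping issues are settled, the proofs reduce to the ungraded classical arguments, which is why the statements are quoted from \cite{Z} and \cite{NV} rather than reproven here.
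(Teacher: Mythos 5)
The paper does not actually prove this lemma: it quotes it from \cite[Lemmas 3.4, 3.10]{Z} and \cite[Theorem 2.10.10]{NV}, so your self-contained sketch is a different (more explicit) route than the paper's. Part (i) is the standard argument and is sound in this setting: since $P$ and $Q$ are finitely generated projective generators and all algebras are finite dimensional, the isomorphism $\Hom_{E\hat\otimes F}(P\hat\otimes Q, M\hat\otimes N)\cong \Hom_E(P,M)\hat\otimes\Hom_F(Q,N)$ does hold (it would not for arbitrary modules, so the finite generation/projectivity hypothesis is what makes the endomorphism-ring computation $\End_{E\hat\otimes F}(P\hat\otimes Q)\cong E'\hat\otimes F'$ legitimate), and the sign bookkeeping you describe is exactly what \cite{Z} carries out.

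In part (ii) there is one substantive slip you should repair. With the paper's convention, $\Hom_{\gr_{\mathbb Z_2}E}(M,N)$ consists only of degree-preserving maps, so $\End_{\gr_{\mathbb Z_2}E}(S_i)$ is concentrated in degree $0$ and, more seriously, $\End_{\gr_{\mathbb Z_2}E}(E_E)^{\op}\cong E_0$, not $E$; taken literally, your final identification fails. The correct objects are the full graded endomorphism rings $\Hom_E(S_i,S_i)=\Hom_{\gr_{\mathbb Z_2}E}(S_i,S_i)\oplus\Hom_{\gr_{\mathbb Z_2}E}(S_i,S_i(1))$ (a $\mathbb Z_2$-graded division algebra by the graded Schur lemma) and $\Hom_E(E_E,E_E)^{\op}\cong E$, and the isotypic grouping must be taken up to degree shift, i.e.\ $S_i$ and $S_i(1)$ belong to the same block whether or not they are graded isomorphic. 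With this repair the blocks become matrix algebras over graded division algebras carrying possibly shifted gradings, which is precisely the form the paper uses afterwards (``with possible degree shifts'' in the proof of Proposition \ref{prop-gl}) and the statement of \cite[Theorem 2.10.10]{NV}. You already flag the shift issue in your closing remarks, so this is a fix of notation and of where the grading lives rather than of the overall strategy, but as written the displayed isomorphism $E\cong\End_{\gr_{\mathbb Z_2}E}(E_E)^{\op}$ is false and must be replaced by its $\Hom_E$ version.
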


Since the ground field is $\mathbb C$, there are only two classes of finite dimensional graded division algebras. Let $ \mathbb G=\{1,\sigma\}$ be a group of order 2, and let $\mathbb{CG}  $ be the group algebra. Then $\mathbb {CG}$ is a $\mathbb Z_2$-graded algebra by setting $|\sigma|=1$ and $|1|=0$.

\begin{lemma}\label{lem-div} Let $E$ be a finite dimensional $\mathbb Z_2$-graded division algebra over $\mathbb C$. Then $E$ is isomorphic to either $\mathbb {CG}$ or $\mathbb C$, where $\mathbb C$ is viewed as a $\mathbb Z_2$-graded algebra concentrated in degree $0$.
\end{lemma}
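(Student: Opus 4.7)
My plan is to reduce the statement to an analysis of $E_0$ and $E_1$ separately, using that a finite-dimensional (ungraded) division algebra over the algebraically closed field $\mathbb C$ must equal $\mathbb C$.

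First I would show that $E_0 \cong \mathbb C$. Any nonzero $a \in E_0$ is a nonzero homogeneous element of $E$, hence invertible in $E$; since multiplication respects the $\mathbb Z_2$-grading and inverses are unique, $a^{-1}$ must lie in $E_0$. Thus $E_0$ is a finite-dimensional (ungraded) division $\mathbb C$-algebra, which forces $E_0 = \mathbb C$ since $\mathbb C$ is algebraically closed.

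Next I would analyze $E_1$. If $E_1 = 0$, then $E = \mathbb C$ and we are done. Otherwise, pick any nonzero $x \in E_1$; the map $E_0 \to E_1$, $a \mapsto ax$, is both injective (as $x$ is a unit) and surjective (since any $y \in E_1$ satisfies $y = (yx^{-1})x$ with $yx^{-1} \in E_0$). Hence $E_1 = \mathbb C x$ is one-dimensional. Now $x^2 \in E_0 = \mathbb C$, and $x^2 \neq 0$ because $x$ is invertible; pick $c \in \mathbb C$ with $c^2 = x^2$ and set $\sigma = c^{-1} x \in E_1$. Then $\sigma^2 = 1$ and $E = \mathbb C \oplus \mathbb C \sigma$ with $|\sigma| = 1$, so sending the nontrivial element of $\mathbb G$ to $\sigma$ defines a graded algebra isomorphism $\mathbb{CG} \cong E$.

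The argument is elementary and I do not anticipate a genuine obstacle; the only substantive inputs are algebraic closedness of $\mathbb C$, used both to conclude $E_0 = \mathbb C$ and to extract the square root needed to normalize $x$ so that $\sigma^2 = 1$, and the fact that inversion preserves the $\mathbb Z_2$-grading, which is what promotes $E_0$ to a division algebra and makes $E_1$ a free $E_0$-module of rank at most one.
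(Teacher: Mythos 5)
Your proof is correct and follows essentially the same route as the paper: identify $E_0=\mathbb C$ via algebraic closedness, show $\dim E_1\le 1$ using invertibility of homogeneous elements, and rescale a degree-one element by a square root of $x^2$ to obtain $\sigma$ with $\sigma^2=1$, giving $E\cong\mathbb{CG}$ (the paper phrases this last step via the skew group algebra $\mathbb C\#\mathbb G$, but the normalization is the same idea).
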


\begin{proof}
Since $E_0$ is a finite dimensional division algebra over $\mathbb{C}$, hence $E_0=\mathbb{C}$. If $E_1\ne 0$, for any non-zero elements $x,y\in E_1$, we have $x^{-1}y\in \mathbb{C}$ and $x=\lambda y$ for some $\lambda\in\mathbb C$. Therefore, $\dim E_1=1$ and $E$ is isomorphic to the skew group algebra $ \mathbb{C}\# \mathbb G$. The action of $\sigma\in \mathbb G$ is $\pm 1$, and both of them are isomorphic to the group algebra $\mathbb{CG}$.
\end{proof}

For a $\mathbb Z_2$-graded algebra $E$, we write $\gldim_{\mathbb Z_2} E$ for the graded right global dimension of $E$.

\begin{lemma}\cite[Corollary 7.4]{HY}\label{lem-gl} Let $E$ be a $\mathbb Z_2$-graded algebra. Then
$$\gldim_{\mathbb Z_2} E=\gldim_{\mathbb Z_2} E\hat\otimes \mathbb C\mathbb G.$$
\end{lemma}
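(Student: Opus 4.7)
The plan is to exploit that $\tilde E := E\hat\otimes\mathbb{CG}$ is strongly $\mathbb Z_2$-graded, apply a Dade-type equivalence to compute $\gldim_{\mathbb Z_2}\tilde E$ in terms of the degree-zero subring, and then to compare with $\gldim_{\mathbb Z_2} E$ via an averaging argument.

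First, I would note that $1\hat\otimes\sigma\in\tilde E_1$ is a unit (its square is $1\hat\otimes 1$), so $\tilde E_1\cdot\tilde E_1=\tilde E_0$ and $\tilde E$ is strongly $\mathbb Z_2$-graded. By the Dade-type theorem for strongly graded rings, the degree-zero functor $N\mapsto N_0$ is an equivalence $\gr_{\mathbb Z_2}\tilde E\simeq\mmod\tilde E_0$, and hence $\gldim_{\mathbb Z_2}\tilde E=\gldim\tilde E_0$ (ungraded).

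Next, I would identify $\tilde E_0$ as an ungraded algebra. As a vector space $\tilde E_0=(E_0\hat\otimes 1)\oplus(E_1\hat\otimes\sigma)$, and the twisted tensor rule gives $(e_1\hat\otimes\sigma)(e_1'\hat\otimes\sigma)=-e_1 e_1'\hat\otimes 1$ for $e_1,e_1'\in E_1$, while the remaining cross products among the summands are the evident ones. A short calculation then verifies that the $\mathbb{C}$-linear map $\tilde E_0\to E$ sending $e_0\hat\otimes 1+e_1\hat\otimes\sigma\mapsto e_0+\sqrt{-1}\,e_1$ is an isomorphism of ungraded algebras. Combined with the previous step this yields $\gldim_{\mathbb Z_2}\tilde E=\gldim E$ (ungraded).

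It then remains to show $\gldim_{\mathbb Z_2} E=\gldim E$, which is a standard consequence of $|\mathbb Z_2|=2$ being invertible in $\mathbb{C}$: for any graded $E$-module $M$, an ungraded splitting $s$ of a graded surjection $P\twoheadrightarrow M$ can be averaged against the grading involutions $\tau_M$ and $\tau_P$, i.e.\ replaced by $\tfrac12(s+\tau_P\, s\,\tau_M^{-1})$, to produce a graded splitting; hence graded and ungraded projectivity coincide for graded modules, and the corresponding global dimensions agree. Combining the three steps yields $\gldim_{\mathbb Z_2}(E\hat\otimes\mathbb{CG})=\gldim E=\gldim_{\mathbb Z_2} E$. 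The main obstacle is the third step—pushing the averaging argument through for arbitrary graded $E$-modules without any finiteness hypothesis on $E$—but over $\mathbb C$ (where both the identification $\tilde E_0\cong E$ and the $\tfrac12$-averaging are available) this presents no difficulty.
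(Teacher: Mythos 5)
Your argument is essentially sound, and it is worth noting that the paper itself gives no proof of Lemma \ref{lem-gl}: it is quoted from \cite[Corollary 7.4]{HY}. Your steps (1) and (2) are precisely the mechanism of the paper's own Proposition \ref{prop-str}: $1\hat\otimes\sigma$ is a degree-one unit, so $\tilde E=E\hat\otimes\mathbb{CG}$ is strongly graded and Dade's theorem gives $\gr_{\mathbb Z_2}\tilde E\simeq \mathrm{mod}\,\tilde E_0$, while your map $e_0+e_1\hat\otimes\sigma\mapsto e_0+\sqrt{-1}\,e_1$ is exactly the isomorphism $\Xi:(E\hat\otimes\mathbb{CG})_0\to E^\natural$ used there (the sign check against the twisted product rule is correct). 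So up to this point you have $\gldim_{\mathbb Z_2}(E\hat\otimes\mathbb{CG})=\gldim E^\natural$, by the same tools the paper uses elsewhere; what you add is the bridge $\gldim E^\natural=\gldim_{\mathbb Z_2}E$, which is indeed the standard route to this lemma in characteristic $0$.

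The one place you are too quick is that bridge. Your averaging $s\mapsto\tfrac12(s+\tau_P s\tau_M^{-1})$ does show that a graded module is graded-projective iff its underlying module is projective (in fact this holds for any group grading, by taking the degree-preserving component of an ungraded splitting, with no need for $\tfrac12$), but that only yields $\gldim_{\mathbb Z_2}E\le\gldim E^\natural$: it says nothing about ungraded $E$-modules that carry no grading, so the phrase ``the corresponding global dimensions agree'' does not follow from it. For the reverse inequality you need the other half of the standard Maschke-type argument: for an arbitrary $E^\natural$-module $N$, form the graded module $F(N)$ with $F(N)_0=F(N)_1=N$ and the obvious $E$-action; the summation map $F(N)\to N$ is split by $n\mapsto\tfrac12(n,n)$ (here $\tfrac12$ really is needed), so $N$ is a direct summand of the restriction of a graded module and $\pdim_{E^\natural}N\le\gldim_{\mathbb Z_2}E$. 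Equivalently one can invoke the skew group algebra $E\rtimes\mathbb Z_2$ and Maschke's theorem. With that half supplied, your three steps assemble into a complete proof; as written, the third step is under-justified rather than wrong.
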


\begin{proposition}\label{prop-gl} Let $E$ be a $\mathbb Z_2$-graded algebra over $\mathbb C$, and let $F$ be a  $\mathbb Z_2$-graded semi-simple algebra over $\mathbb C$. Then $\gldim_{\mathbb Z_2} E\hat\otimes F=\gldim_{\mathbb Z_2}E$. In particular, $E\hat\otimes F$ is graded semisimple if and only if $E$ is graded semisimple.
\end{proposition}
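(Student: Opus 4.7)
The plan is to reduce $F$ to graded division algebras via Morita equivalence and then invoke Lemma \ref{lem-gl}. Since $F$ is graded semisimple, the $\mathbb Z_2$-graded Wedderburn--Artin theorem (Lemma \ref{lem-z2}(ii)) expresses $F$ as a finite direct product of graded matrix algebras over graded division algebras, and each graded matrix algebra is graded Morita equivalent to its base division algebra via the standard progenerator. Combined with Lemma \ref{lem-div}, this shows that $F$ is graded Morita equivalent to a finite direct product $\prod_{i=1}^n D_i$ with each $D_i$ isomorphic to either $\mathbb C$ (concentrated in degree $0$) or $\mathbb{CG}$.

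Next I would apply Lemma \ref{lem-z2}(i), taking the trivial equivalence $E\sim E$ in the left slot and $F\sim \prod_i D_i$ in the right, to deduce that $E\hat\otimes F$ is graded Morita equivalent to $E\hat\otimes \prod_i D_i\cong \prod_i (E\hat\otimes D_i)$. Each factor on the right is either $E$ (when $D_i=\mathbb C$) or $E\hat\otimes \mathbb{CG}$ (when $D_i=\mathbb{CG}$). Since graded Morita equivalent algebras have equal graded global dimensions, and the graded global dimension of a finite product is the maximum of the graded global dimensions of the factors,
\[
\gldim_{\mathbb Z_2}(E\hat\otimes F) \;=\; \max_{i}\,\gldim_{\mathbb Z_2}(E\hat\otimes D_i).
\]
By Lemma \ref{lem-gl} each term on the right equals $\gldim_{\mathbb Z_2} E$, so $\gldim_{\mathbb Z_2}(E\hat\otimes F)=\gldim_{\mathbb Z_2} E$, as desired.

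For the ``in particular'' clause I would invoke the standard fact that a finite dimensional $\mathbb Z_2$-graded algebra is graded semisimple if and only if its graded global dimension is zero (equivalently, $J^g=0$ is equivalent to every graded module being projective). Substituting this equivalence into the identity just established then yields that $E\hat\otimes F$ is graded semisimple if and only if $E$ is graded semisimple.

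The only real obstacle is bookkeeping around the twisted tensor product: one must verify that $\hat\otimes$ distributes over direct products of $\mathbb Z_2$-graded algebras and that graded Morita equivalence in each slot is respected by $\hat\otimes$, both of which feed into Lemma \ref{lem-z2}(i). The sign convention in the definition of $\hat\otimes$ is what makes this nontrivial, but all of the delicate work is already packaged in the cited lemmas, so the argument amounts to a clean concatenation of Lemma \ref{lem-z2}(i)(ii), Lemma \ref{lem-div}, and Lemma \ref{lem-gl}.
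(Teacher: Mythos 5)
Your proposal is correct and follows essentially the same route as the paper's proof: decompose $F$ via the $\mathbb Z_2$-graded Wedderburn--Artin theorem, reduce each factor by graded Morita equivalence to $\mathbb C$ or $\mathbb{CG}$ using Lemma \ref{lem-div}, and then combine Lemma \ref{lem-z2}(i) with Lemma \ref{lem-gl}. The extra bookkeeping you supply (distributing $\hat\otimes$ over products, taking the maximum of graded global dimensions, and the characterization of graded semisimplicity by vanishing graded global dimension) is exactly what the paper leaves implicit.
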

\begin{proof} By the $\mathbb Z_2$-graded version of Wedderburn-Artin Theorem, $F$ is isomorphic to a direct product of finitely many matrix algebras over some graded division algebras. Note that a matrix algebra over a graded division algebra (with possible degree shifts) is graded Morita equivalent to a graded division algebra, and hence a matrix algebra over a graded division algebra is graded Morita equivalent to either $\mathbb {CG}$ or $\mathbb C$ by Lemma \ref{lem-div}. Then the result follows from Lemma \ref{lem-z2}(i) and Lemma \ref{lem-gl}.
\end{proof}

We have the following Morita cancellation type property (see  \cite{LWZ}).

\begin{proposition}\label{prop-can} Let $E$ and $F$ be $\mathbb Z_2$-graded algebras. Then $E\hat\otimes\mathbb {CG}$ is graded Morita equivalent to $F\hat\otimes \mathbb {CG}$ if and only if $E$ is graded Morita equivalent to $F$.
\end{proposition}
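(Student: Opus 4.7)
The plan is to reduce the nontrivial (``cancellation'') direction of the proposition to a single computation: that $\mathbb{CG}\hat\otimes\mathbb{CG}$ is graded Morita equivalent to $\mathbb C$. The easy direction is immediate: if $E$ is graded Morita equivalent to $F$, then applying Lemma \ref{lem-z2}(i) with the trivial self-equivalence of $\mathbb{CG}$ yields that $E\hat\otimes\mathbb{CG}$ is graded Morita equivalent to $F\hat\otimes\mathbb{CG}$.

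The key step is the following computation. Writing $\sigma_1=\sigma\hat\otimes 1$ and $\sigma_2=1\hat\otimes\sigma$ in $\mathbb{CG}\hat\otimes\mathbb{CG}$, the sign rule of the twisted tensor product gives $\sigma_1^2=\sigma_2^2=1$ and $\sigma_1\sigma_2=-\sigma_2\sigma_1$, so $\mathbb{CG}\hat\otimes\mathbb{CG}$ is the complex Clifford algebra on two anticommuting square roots of unity, isomorphic as an ungraded algebra to $M_2(\mathbb C)$. The induced $\mathbb Z_2$-grading places $\{1,\sigma_1\sigma_2\}$ in degree $0$ and $\{\sigma_1,\sigma_2\}$ in degree $1$; concretely this is the standard grading on $M_2(\mathbb C)$ with even part the diagonal matrices and odd part the antidiagonal matrices. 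Taking the two-dimensional graded vector space $V=\mathbb C v_0\oplus\mathbb C v_1$ with $|v_i|=i$, one has $M_2(\mathbb C)\cong \End_{\mathbb C}(V)$ as $\mathbb Z_2$-graded algebras, and $V$ is then a graded progenerator exhibiting the graded Morita equivalence between $\mathbb{CG}\hat\otimes\mathbb{CG}$ and $\mathbb C$.

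Combined with Lemma \ref{lem-z2}(i) and the associativity of $\hat\otimes$ (a routine check using the sign rule), this gives, for any $\mathbb Z_2$-graded algebra $E$, that $(E\hat\otimes\mathbb{CG})\hat\otimes\mathbb{CG}\cong E\hat\otimes(\mathbb{CG}\hat\otimes\mathbb{CG})$ is graded Morita equivalent to $E\hat\otimes\mathbb C\cong E$. Hence, starting from the hypothesis that $E\hat\otimes\mathbb{CG}$ is graded Morita equivalent to $F\hat\otimes\mathbb{CG}$ and tensoring once more with $\mathbb{CG}$ via Lemma \ref{lem-z2}(i), we deduce that $E$ is graded Morita equivalent to $F$.

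The main obstacle is the computation in the second paragraph: one must identify $\mathbb{CG}\hat\otimes\mathbb{CG}$ with $M_2(\mathbb C)$ not merely as ungraded algebras but as $\mathbb Z_2$-graded algebras with the correct (Clifford) grading, and exhibit the two-dimensional graded progenerator explicitly. Once this concrete identification is in place, the cancellation statement is simply the formal observation that tensoring with $\mathbb{CG}$ twice is the identity on graded Morita equivalence classes.
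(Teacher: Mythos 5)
Your argument is correct, and its overall shape is the same as the paper's: for the nontrivial direction, tensor once more with $\mathbb{CG}$, invoke Lemma \ref{lem-z2}(i), and then cancel $\mathbb{CG}\hat\otimes\mathbb{CG}$. The difference is in how the cancellation is justified. The paper simply cites \cite[Lemma 7.2]{HY} for the statement that $E\hat\otimes\mathbb{CG}\hat\otimes\mathbb{CG}$ is graded Morita equivalent to $E$, whereas you prove it from scratch: the sign rule indeed gives $\sigma_1^2=\sigma_2^2=1$ and $\sigma_1\sigma_2=-\sigma_2\sigma_1$, so $\mathbb{CG}\hat\otimes\mathbb{CG}$ is the rank-two complex Clifford algebra, i.e.\ $M_2(\mathbb C)$ with even part the diagonal and odd part the antidiagonal, which is $\End_{\mathbb C}\bigl(\mathbb C\oplus\mathbb C(1)\bigr)$ and hence graded Morita equivalent to $\mathbb C$ (the same standard fact about graded matrix algebras over graded division algebras that the paper itself uses in the proof of Proposition \ref{prop-gl}); combined with associativity of $\hat\otimes$ and $E\hat\otimes\mathbb C\cong E$, this yields the cancellation. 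What your route buys is self-containedness --- the reader does not need the external reference \cite{HY} for this step, only the explicit two-dimensional graded progenerator; what the paper's route buys is brevity, delegating the Clifford computation to a lemma proved elsewhere. Either way the easy direction is handled identically via Lemma \ref{lem-z2}(i).
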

\begin{proof} Suppose that $E\hat\otimes\mathbb {CG}$ is graded Morita equivalent to $F\hat\otimes \mathbb {CG}$. By Lemma \ref{lem-z2}(i), $E\hat\otimes\mathbb {CG}\hat\otimes\mathbb{CG}$ is graded Morita equivalent to $F\hat\otimes \mathbb {CG}\hat\otimes\mathbb {CG}$. By \cite[Lemma 7.2]{HY}, the $\mathbb Z_2$-graded algebra $E\hat\otimes\mathbb {CG}\hat\otimes\mathbb{CG}$ is graded Morita equivalent to $E$, and $F\hat\otimes\mathbb {CG}\hat\otimes\mathbb{CG}$ is graded Morita equivalent to $F$. Hence $E$ is graded Morita equivalent to $F$. The other direction is a consequence of Lemma \ref{lem-z2}(i).
\end{proof}

We next focus on strongly $\mathbb Z_2$-graded algebras. Recall that a $\mathbb Z_2$-graded algebra $E$ is said to be {\it strongly graded} if $E_1E_1=E_0$. Let $E$ and $F$ be strongly $\mathbb Z_2$-graded algebras. Then $E\hat\otimes F$ is also a strongly $\mathbb Z_2$-graded algebra.
\begin{proposition}\label{prop-str} Let $E$ be a finite dimensional strongly $\mathbb Z_2$-graded algebra. Then there is an equivalence of abelian categories $$\gr_{\mathbb Z_2} E\hat\otimes\mathbb{CG}\cong\text{\rm mod} E,$$ where $\text{\rm mod} E$ is the category of finite dimensional right $E$-modules (ignoring the grading of $E$).
\end{proposition}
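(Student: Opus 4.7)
My plan is to apply the classical Dade equivalence for strongly graded algebras to $K := E\hat\otimes \mathbb{CG}$, and then identify $K_0$ with $E$ as ungraded algebras. The paragraph just before the proposition already records that $K$ is strongly $\mathbb{Z}_2$-graded (since $\mathbb{CG}$ itself is strongly graded---$\sigma^2 = 1$---and twisted tensor products of strongly graded algebras are again strongly graded), which is the only structural hypothesis the argument requires.

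The first step sets up the Dade equivalence. For any finite dimensional strongly $\mathbb{Z}_2$-graded algebra $K$, the multiplication map $K_1 \otimes_{K_0} K_1 \to K_0$ is a $K_0$-bimodule isomorphism, so $K_1$ is an invertible $K_0$-bimodule. It follows that the restriction functor $(-)_0 \colon \gr_{\mathbb Z_2} K \to \text{mod}\, K_0$ and the induction functor $-\otimes_{K_0} K \colon \text{mod}\, K_0 \to \gr_{\mathbb Z_2} K$ are mutually inverse equivalences of abelian categories, the latter carrying its $\mathbb{Z}_2$-grading from the second factor. Applied to $K = E\hat\otimes\mathbb{CG}$ this yields $\gr_{\mathbb Z_2}(E\hat\otimes\mathbb{CG}) \cong \text{mod}\, K_0$.

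The second step exhibits an algebra isomorphism $\phi \colon E \to K_0$. As a vector space $K_0 = (E_0 \otimes 1)\oplus (E_1\otimes \sigma)$, which has dimension $\dim E$, so the real issue is multiplicativity. The naive candidate $x \mapsto x\otimes \sigma^{|x|}$ fails, because $\sigma^2 = 1$ in $\mathbb{CG}$ while the Koszul sign $(-1)^{|y||x'|}$ in the product on $K$ contributes an extra $-1$ on pairs of degree-one elements. To compensate, I would fix $\iota \in \mathbb{C}$ with $\iota^2 = -1$ and define
\[
\phi(x) = \iota^{|x|}\, x \otimes \sigma^{|x|} \qquad \text{for homogeneous } x \in E.
\]
The scalar $\iota^{|x|}\iota^{|y|}$ is $-1$ exactly when $|x|=|y|=1$, precisely cancelling the Koszul sign, and a direct check then confirms $\phi$ is a unital algebra isomorphism. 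Composing with the Dade equivalence above produces the desired $\gr_{\mathbb Z_2}(E\hat\otimes \mathbb{CG}) \cong \text{mod}\, E$. The only genuinely delicate point in the whole argument is getting this sign twist right; the remainder is a routine application of Dade's theorem.
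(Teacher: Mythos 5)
Your proposal is correct and follows essentially the same route as the paper: invoke Dade's equivalence $\gr_{\mathbb Z_2}(E\hat\otimes\mathbb{CG})\cong \text{mod}\,(E\hat\otimes\mathbb{CG})_0$ for the strongly graded algebra $E\hat\otimes\mathbb{CG}$, then identify $(E\hat\otimes\mathbb{CG})_0$ with $E$ as ungraded algebras via a $\sqrt{-1}$-twisted map (the paper's map $\Xi$ is just your $\phi$ written in the opposite direction). The sign bookkeeping in your $\phi(x)=\iota^{|x|}x\otimes\sigma^{|x|}$ checks out, so no gaps.
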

\begin{proof} As a vector space, the degree 0 part of $E\hat\otimes\mathbb{CG}$ is equal to
$$E_1\otimes \mathbb{C}\sigma\oplus E_0\otimes \mathbb C=E_1\otimes \mathbb{C}\sigma\oplus E_0,$$ and the multiplication of $(E\hat\otimes\mathbb{CG})_0$ is given as following (we temporarily write the multiplication of $E\hat\otimes \mathbb {CG}$ by the symbol ``$*$''): for $a,b\in E_1$, $c,d\in E_0$, we have $$(a\otimes \sigma)*(b\otimes \sigma)=-ab,\ \ (a\otimes \sigma)*c=ac\otimes\sigma,$$ $$c*(a\otimes \sigma)=ca\otimes\sigma,\ \ c*d=cd.$$ We temporarily write $E^\natural$ for the ungraded algebra obtained from $E$ by ignoring the grading. Define a linear map $$\Xi:(E\hat\otimes\mathbb{CG})_0\longrightarrow E^\natural,$$ by setting $\Xi(a\otimes \sigma)=\sqrt{-1}a$ if $a\in E_1$, and $\Xi(a)=a$ if $a\in E_0$. Then it is straightforward to check that $\Xi$ is an isomorphism of algebras.

Since $E$ is strongly graded, $E\hat\otimes\mathbb {CG}$ is also strongly graded and there is an equivalence of abelian categories $\gr_{\mathbb Z_2}E\hat\otimes\mathbb{CG}\cong \text{mod} (E\hat\otimes\mathbb{CG})_0$, which is in turn equivalent to $\text{mod} E^\natural$.
\end{proof}

\section{Products of quadric hypersurfaces} \label{sec2}

Let us recall some notations and terminologies. An $\mathbb N$-graded algebra $A=\oplus_{n\in\mathbb N}A_n$ is called a {\it connected graded algebra} if $A_0=\mathbb C$. A connected graded algebra is said to be {\it locally finite}, if $\dim A_n<\infty$ for all $n$. Let $\Gr A$ denote the category whose objects are graded right $A$-modules, and whose morphisms are right $A$-module morphisms which preserve the gradings of modules. For a graded right $A$-module $X$ and an integer $l$, we write $X(l)$ for the graded right $A$-module whose $i$th component is $X(l)_i=X_{i+l}$.

A locally finite connected graded algebra $A$ is called a {\it Koszul algebra} (see  \cite{P}) if the trivial module $\kk_A$ has a {\it linear free resolution}; i.e., $$\cdots\longrightarrow P^{n}\longrightarrow\cdots\longrightarrow P^1\longrightarrow P^0\longrightarrow\mathbb C\longrightarrow0,$$ where $P^n$ is a graded free module generated in degree $n$ for each $n\ge0$. Note that a Koszul algebra is a {\it quadratic algebra}, that is, $A\cong T(V)/(R)$, where $V$ is a finite dimensional vector space and $R\subseteq V\otimes V$. If $A$ is a Koszul algebra, the {\it quadratic dual} of $A$ is the quadratic algebra $A^!=T(V^*)/(R^\bot)$, where $V^*$ is the dual vector space and $R^\bot\subseteq V^*\otimes V^*$ is the orthogonal complement of $R$.

For graded right $A$-modules $X$ and $Y$, denote $\uHom_A(X,Y)=\bigoplus_{i\in\mathbb Z}\Hom_{\Gr A}(X,Y(i))$. Then $\uHom_A(X,Y)$ is a $\mathbb Z$-graded vector space. We write $\uExt_A^i$ for the $i$th derived functor of $\uHom_A$. Note that $\uExt_A^i(X,Y)$ is also a $\mathbb Z$-graded vector space for each $i\ge0$.

\begin{definition} \cite{AS} A connected graded algebra $A$ is called an {\it Artin-Schelter Gorenstein algebra} of injective dimension $d$ if
\begin{itemize}
  \item [(i)] $A$ has finite injective dimension $\id {}_AA=\id A_A=d<\infty$,
  \item [(ii)] $\uExt_A^i(\mathbb{C}_A,A_A)=0$ for $i\neq d$, and $\uExt_A^d(\mathbb{C}_A,A_A)\cong {}_A\mathbb{C}(l)$,
  \item [(iii)] the left version of (ii) holds.
\end{itemize}
If further, $A$ has finite global dimension, then $A$ is called an {\it Artin-Schelter regular} algebra.
\end{definition}

An Artin-Schelter regular algebra $A$ is called a {\it quantum polynomial algebra} if
(i) $A$ is a noetherian domain, (ii) $A$ is a Koszul algebra with Hilbert series $$H_t(A):=\sum_{i\ge0} t^n\dim (A_n)=\frac{1}{(1-t)^d}$$ for some $d>0$.

Let $A$ be a noetherian Artin-Schelter Gorenstein algebra. Let $X$ be a graded right $A$-module, and let $\Gamma(X)=\{x\in X|xA\text{ if finite dimensional}\}$. We obtain a functor $\Gamma:\Gr A\longrightarrow\Gr A$. We write $R^i\Gamma$ for the $i$th right derived functor of $\Gamma$.
Assume that $\id {}_AA=\id A_A=d$. A finitely generated graded right $A$-module $X$ is called a {\it maximal Cohen-Macaulay module} if $R^i\Gamma(X)=0$ for all $i\neq d$. Let $\mcm A$ be the subcategory of $\Gr A$ consisting of all the maximal Cohen-Macaulay modules. Let $\underline{\mcm} A$ be the stable category of $\mcm A$. Then $\underline{\mcm} A$ is a triangulated category.

Let $V$ and $U$ be finite dimensional vector spaces, and let $A=T(V)/(R_A)$ and $B=T(U)/(R_B)$ be Koszul algebras. Let us consider the tensor algebra $A\otimes B$.  We view $A$ and $B$ as graded subalgebras of $A\otimes B$ through the injective maps $A\hookrightarrow A\otimes B, a\mapsto a\otimes 1$ and $B\hookrightarrow A\otimes B, b\mapsto 1\otimes b$ respectively. Also, we identify $A_1$ with $V$ and $B_1$ with $U$. So, the generating space of $A\otimes B$ is $W=V\oplus U$, and $A\otimes B\cong T(W)/(R_{A\otimes B})$, where
\begin{equation}\label{eqr}
  R_{A\otimes B}=R_A\oplus [V,U]\oplus R_B,
\end{equation}
in which (see  \cite[Chapter 3]{M}) $$[V,U]=\{v\otimes u-u\otimes v|v\in V,u\in U\}.$$ Moreover, $A\otimes B$ is also a Koszul algebra.
Let $A^!$ and $B^!$ be the quadratic dual algebras of $A$ and $B$ respectively. The quadratic dual algebra of $A\otimes B$ is the following graded algebra \cite[3.5]{M} $$(A\otimes B)^!=A^!\hat\otimes B^!.$$

Now we assume both $A$ and $B$ are Koszul Artin-Schelter regular algebras. Then $A^!$ and $B^!$ are graded Frobenius (see  \cite[Proposition 5.1]{Sm}). It is easy to see that the graded tensor product $A^!\hat\otimes B^!$ is also a graded Frobenius algebra. Hence by \cite[Proposition 5.1]{Sm} again, $A\otimes B$ is an Artin-Schelter regular algebra.

In this section, we are interested in the quadric hypersurfaces obtained from $A\otimes B$.

Now assume both $A$ and $B$ are noetherian Koszul Artin-Schelter regular algebras, further more, $A\otimes B$ is also noetherian. The following lemma is clear.

\begin{lemma}
Let $f\in A_2$ and $g\in B_2$ be central regular elements of $A$ and $B$ respectively. View $f$ and $g$ as elements in $A\otimes B$. Then $h=f+g$ is a central regular element of $A\otimes B$.
\end{lemma}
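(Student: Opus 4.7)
The plan is to dispatch centrality by a direct check and then prove regularity by exhibiting $A\otimes B$ as a free module over the central polynomial subring $\mathbb C[f,g]$, so that regularity of $h=f+g$ reduces to the obvious commutative statement.

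For centrality, identify $f\in A\otimes B$ with $f\otimes 1$ and $g$ with $1\otimes g$. Since $f$ is central in $A$, we have $(f\otimes 1)(a\otimes 1)=(a\otimes 1)(f\otimes 1)$ for every $a\in A$, and $(f\otimes 1)(1\otimes b)=f\otimes b=(1\otimes b)(f\otimes 1)$ for every $b\in B$; as $A\otimes B$ is generated by such elements, $f$ is central. The same argument shows $g$ is central, so $h=f+g$ is central; in particular it suffices to check one-sided regularity.

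For regularity, I first claim that $A$ is free as a graded right module over the central subring $\mathbb C[f]\subseteq A$. Pick a graded vector space $V\subseteq A$ that maps bijectively onto $A/(f)$; then the multiplication map $V\otimes \mathbb C[f]\to A$ is a graded isomorphism, surjectivity following by a degree-wise induction from $A=V+fA$ and injectivity using regularity of $f$ together with $V\cap fA=0$. An identical argument gives $B\cong U\otimes\mathbb C[g]$ for a suitable graded $U\subseteq B$. Tensoring these two isomorphisms over $\mathbb C$, and using that $fg=gf$ in $A\otimes B$ (so that $\mathbb C[f,g]:=\mathbb C[f]\otimes\mathbb C[g]$ is a genuine polynomial subring sitting in the center of $A\otimes B$), we obtain a graded $\mathbb C[f,g]$-linear isomorphism
\[
A\otimes B\;\cong\;(V\otimes U)\otimes_{\mathbb C}\mathbb C[f,g],
\]
so $A\otimes B$ is free as a graded $\mathbb C[f,g]$-module.

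The ring $\mathbb C[f,g]$ is a commutative polynomial ring in two indeterminates, hence a domain, so multiplication by the nonzero element $f+g$ is injective on $\mathbb C[f,g]$; freeness then propagates this injectivity to $A\otimes B$, proving that $h=f+g$ is regular. The only step requiring any real care is the freeness of a graded algebra over the polynomial subring generated by a central regular element of positive degree; given that, the rest of the argument is formal.
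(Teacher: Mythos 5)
Your proof is correct, but note that the paper does not actually prove this lemma --- it simply records it with the remark that it ``is clear'' --- so your argument is necessarily a different (and more detailed) route than anything in the text. Every step of your freeness argument checks out: a graded complement $V$ of $fA$ in $A$ gives $A\cong V\otimes\mathbb C[f]$ as right $\mathbb C[f]$-modules (surjectivity by induction on degree since $\deg f=2>0$, injectivity by peeling off powers of $f$ using $V\cap fA=0$ and regularity); $f\otimes1$ and $1\otimes g$ commute, and the monomials $f^i\otimes g^j$ are linearly independent because $f^i\neq0$ and $g^j\neq0$ (regular homogeneous elements of positive degree are not nilpotent), so $\mathbb C[f,g]$ really is a central polynomial subring --- this last linear-independence point is the only place you leave something implicit; and since multiplication by $h$ in $A\otimes B$ agrees with the $\mathbb C[f,g]$-module action of $h$ (centrality), injectivity on the domain $\mathbb C[f,g]$ passes to the free module $A\otimes B$. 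For comparison, the ``clear'' argument the authors presumably have in mind is shorter: $A\otimes B$ carries a $\mathbb Z\times\mathbb Z$-bigrading in which $f$ has bidegree $(2,0)$ and $g$ has bidegree $(0,2)$, and $f\otimes1$, $1\otimes g$ are regular on $A\otimes B$ because tensoring an injective linear map with the identity over a field stays injective; if $(f+g)x=0$ with $x\neq0$, the component of $(f+g)x$ of top $A$-degree is $f$ times the top $A$-degree part of $x$, forcing that part to vanish, a contradiction. Your approach costs more bookkeeping but buys an explicit structural statement (freeness of $A\otimes B$ over $\mathbb C[f,g]$) that is of independent interest; the bigrading argument is the minimal one.
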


By the Rees Lemma \cite[Proposition 3.4(b)]{Le}, $(A\otimes B)/(h)$ is a noetherian Koszul Artin-Schelter Gorenstein algebra. We are going to consider the Cohen-Macaulay modules over $(A\otimes B)/(h)$. Let us recall some notations from \cite{SvdB,HY}.

Let $E=T(X)/(R_E)$ be a Koszul Frobenius algebra. A linear map $\theta:R_E\to \mathbb{C}$ is called a {\it Clifford map} (see  \cite[Definition 2.1]{HY}) if $$(\theta\otimes 1-1\otimes \theta)(X\otimes R_E\cap R_E\otimes X)=0.$$
Given a Clifford map $\theta:R_E\to\mathbb{C}$, define an associative algebra
\begin{equation}\label{eq-cliff}
  C_E(\theta)=T(X)/(r-\theta(r):r\in R).
\end{equation}
The algebra $C_E(\theta)$ is called the {\it Clifford deformation} of $E$ associated to $\theta$. We may view $T(X)$ as a $\mathbb Z_2$-graded algebra by taking $T(X)_0=\mathbb{C}\oplus (\bigoplus_{n\ge1}X^{\otimes 2n})$ and $T(X)_1=\bigoplus_{n\ge1}X^{\otimes 2n-1}$. Since the definition relations of $C_E(\theta)$ lie in degree 0 component of $T(X)$, we may view $C_E(\theta)$ as a $\mathbb Z_2$-graded algebra.

Now let $S=T(Y)/(R_S)$ be a noetherian Koszul Artin-Schelter regular algebra. Denote by $\pi:T(Y)\to S$ the natural projection map. Let $z\in S_2$ be a central regular element of $S$. Pick an element $r_0\in Y\otimes Y$ such that $\pi(r_0)=z$. Let $S^!=T(Y^*)/(R_S^\bot)$ be the quadratic dual algebra of $S$. Define a linear map
\begin{equation}\label{eqtheta}
  \theta_z\colon R_S^\bot\to\mathbb{C},\text{ by setting }\theta_z(\alpha)=\alpha(r_0),\ \forall \alpha\in R_S^\bot.
\end{equation}
Note that the map $\theta_z$ is independent of the choice of $r_0$.
The following results were proved in \cite{HY,SvdB}.

\begin{theorem}\label{thm-HY} Retain the notions as above.
\begin{itemize}
  \item [(i)] $\theta_z$ is a Clifford map of $S^!$;
  \item [(ii)] The Clifford deformation $C_{S^!}(\theta_z)$ is a finite dimensional strongly $\mathbb Z_2$-graded Frobenius algebra;
  \item [(iii)] There is an equivalence of triangulated categories $$\underline{\mcm} S/(z)\cong D^b(\gr_{\mathbb Z_2} C_{S^!}(\theta_z))\cong D^b(\mathrm{mod} C_{S^!}(\theta_z)_0) ,$$ where $\underline{\mcm} S/(z)$ is the stable category of graded maximal Cohen-Macaulay modules of $S/(z)$, and $\gr_{\mathbb Z_2} C_{S^!}(\theta_z)$ is the category of finite dimensional right $\mathbb Z_2$-graded modules of $C_{S^!}(\theta_z)$.
\end{itemize}
\end{theorem}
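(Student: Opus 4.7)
The plan is to address the three parts in order, using the Koszul duality between $S$ and $S^!$ to translate the ring-theoretic properties of $z$ on the $S$-side into algebraic properties of $\theta_z$ and $C_{S^!}(\theta_z)$ on the dual side, and then to invoke the Smith--Van den Bergh machinery to obtain the triangulated equivalence in (iii).

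For (i), I would unravel the Clifford condition as the statement that, for every $\gamma$ in $Y^* \otimes R_S^\bot \cap R_S^\bot \otimes Y^*$, one has $(\theta_z \otimes 1 - 1 \otimes \theta_z)(\gamma) = 0$ in $Y^*$. Using the identification $Y^* \otimes R_S^\bot \cap R_S^\bot \otimes Y^* = (R_S \otimes Y + Y \otimes R_S)^\bot$, the evaluation of this element on an arbitrary $y \in Y$ becomes the pairing $\gamma(r_0 \otimes y - y \otimes r_0)$. Here the centrality of $z = \pi(r_0)$ in $S$ forces $r_0 \otimes y - y \otimes r_0$ to lie in $R_S \otimes Y + Y \otimes R_S$, so the pairing vanishes. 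This also makes transparent why $\theta_z$ does not depend on the choice of lift $r_0$.

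For (ii), since $S$ is Koszul Artin--Schelter regular, $S^!$ is a finite dimensional Koszul Frobenius algebra. The Clifford deformation $C_{S^!}(\theta_z)$ has the same underlying filtered structure with associated graded isomorphic to $S^!$, so it is finite dimensional, and I would transport the Frobenius trace along the filtration (the parity of the defining relations places them in the even component, so the grading is compatible). The strong grading assertion $(C_{S^!}(\theta_z))_1 \cdot (C_{S^!}(\theta_z))_1 = (C_{S^!}(\theta_z))_0$ is the crucial point: I would deduce it from the regularity (not merely centrality) of $z$, since regularity of $z$ is equivalent, via the Koszul complex of $S/(z)$, to the nondegeneracy of the form induced by $\theta_z$ on $Y^*$, which in turn forces the products of odd elements to span the even component.

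For (iii), the plan is to combine the Smith--Van den Bergh description of $\underline{\mcm}\, S/(z)$ with the structural results of the previous section. Smith--Van den Bergh produce a finite dimensional algebra $C(S/(z))$ with $\underline{\mcm}\, S/(z) \cong D^b(\mathrm{mod}\, C(S/(z)))$, and the Koszul-dual construction of this algebra identifies it with the degree zero component $C_{S^!}(\theta_z)_0$. Given (ii), Proposition \ref{prop-str} applied to $E = C_{S^!}(\theta_z)$ (which is strongly graded, so $E \hat\otimes \mathbb{CG}$ is graded Morita equivalent to $E$) yields $\gr_{\mathbb Z_2} C_{S^!}(\theta_z) \cong \mathrm{mod}\, C_{S^!}(\theta_z)_0$, which upgrades to an equivalence of bounded derived categories and chains together with the Smith--Van den Bergh equivalence to give both isomorphisms in (iii). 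The main obstacle will be the canonical identification $C_{S^!}(\theta_z)_0 \cong C(S/(z))$ together with the verification of strong grading in (ii); once these are in place, the rest is a formal assembly of the Koszul and $\mathbb Z_2$-graded Morita theory already recorded in Section~1.
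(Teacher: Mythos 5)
First, a framing remark: the paper itself does not prove Theorem \ref{thm-HY}; it is imported verbatim from \cite{HY} and \cite{SvdB} (``The following results were proved in \cite{HY,SvdB}''), so the only meaningful comparison is with those sources. Your part (i) is correct and is essentially the standard argument: identifying $Y^*\otimes R_S^\bot\cap R_S^\bot\otimes Y^*$ with $(R_S\otimes Y+Y\otimes R_S)^\bot$ and evaluating $(\theta_z\otimes 1-1\otimes\theta_z)(\gamma)$ at $y\in Y$ as $\gamma(r_0\otimes y-y\otimes r_0)$, which vanishes because centrality of $z$ puts $r_0\otimes y-y\otimes r_0$ into $R_S\otimes Y+Y\otimes R_S$; this also gives independence of the lift $r_0$. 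Your part (iii) is a legitimate assembly: Smith--Van den Bergh's equivalence $\underline{\mcm}\,S/(z)\cong D^b(\mathrm{mod}\,C(S/(z)))$, the identification $C(S/(z))\cong C_{S^!}(\theta_z)_0$ (a theorem of \cite{HY}, which you rightly flag as the main thing to verify rather than something automatic), and the equivalence $\gr_{\mathbb Z_2}C_{S^!}(\theta_z)\simeq\mathrm{mod}\,C_{S^!}(\theta_z)_0$ for strongly graded algebras. Note, though, that this last equivalence is Dade's theorem, not Proposition \ref{prop-str} (which concerns $E\hat\otimes\mathbb{CG}$); Dade's theorem is what is used, uncredited, inside the proof of that proposition.

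The genuine gap is in (ii), specifically the strong grading. The Frobenius part can be salvaged: once one knows the PBW-type fact that the filtered algebra $C_{S^!}(\theta_z)$ has associated graded $S^!$ (itself a nontrivial consequence of the Clifford-map condition, proved in \cite{HY}), a socle functional of $S^!$ can be lifted along the filtration to a nondegenerate associative form. But your justification of $C_1C_1=C_0$ does not work as stated: $\theta_z$ is defined only on $R_S^\bot$, so there is in general no ``form induced by $\theta_z$ on $Y^*$'', and even in the commutative case the asserted equivalence ``$z$ regular $\Leftrightarrow$ the form is nondegenerate'' is false. Take $S=\mathbb{C}[x,y]$ and $z=x^2$: $z$ is central and regular, the associated symmetric form has rank one (degenerate), yet $C_{S^!}(\theta_z)=\mathbb{C}\langle u,v\rangle/(u^2-1,\,v^2,\,uv+vu)$ is strongly graded simply because $u^2=1$ and $C_1C_1$ is an ideal of $C_0$. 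So nondegeneracy is neither implied by regularity nor needed for strong grading, and the implication you actually need (regularity of $z$ forces $C_1C_1=C_0$) requires a different argument; it is one of the main technical points of \cite{HY}, obtained from the relation between $C_{S^!}(\theta_z)$ and the (localized) Ext-algebra of $S/(z)$, not from a nondegeneracy statement. Since (iii) invokes Dade's theorem, which needs exactly this strong grading, the gap in (ii) propagates: as written, your plan establishes (i) but leaves the substance of (ii), and hence part of (iii), unproved.
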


Now let us go back to the quadric hypersurface $(A\otimes B)/(h)$. Recall $f\in A_2$, $g\in B_2$ and $h=f+g$, and $A=T(V)/(R_A)$ and $B=T(U)/(R_B)$. Then $A^!=T(V^*)/(R^\bot_A)$, $B^!=T(U^*)/(R^\bot_B)$ and $(A\otimes B)^!=T(V^*\oplus U^*)/(R^\bot_{A\otimes B})$. By Theorem \ref{thm-HY}, we have Clifford maps
\begin{equation}\label{eq-map1}
  \theta_f:R_A^\bot\to\mathbb{C},
\end{equation}
\begin{equation}\label{eq-map1}
  \theta_g:R_B^\bot\to\mathbb{C},
\end{equation}
\begin{equation}\label{eq-map1}
  \theta_h:R_{A\otimes B}^\bot\to\mathbb{C}.
\end{equation}

\begin{lemma}\label{lem-clifford}
Retain the notations as above. we have an isomorphism of $\mathbb Z_2$-graded algebras $$C_{(A\otimes B)^!}(\theta_h)\cong C_{A^!}(\theta_f)\hat\otimes C_{B^!}(\theta_g).$$
\end{lemma}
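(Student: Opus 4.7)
The plan is to construct the isomorphism directly from the defining presentations, exploiting the decomposition $(A\otimes B)^! = A^!\hat\otimes B^!$ already noted in the excerpt. The first step is to write down the relation space $R_{A\otimes B}^\perp\subseteq (V^*\oplus U^*)^{\otimes 2}$ explicitly. Using the decomposition $(V^*\oplus U^*)^{\otimes 2}=V^*\otimes V^*\oplus V^*\otimes U^*\oplus U^*\otimes V^*\oplus U^*\otimes U^*$ and the formula $R_{A\otimes B}=R_A\oplus [V,U]\oplus R_B$ from \eqref{eqr}, one verifies that
\[
R_{A\otimes B}^\perp \;=\; R_A^\perp \,\oplus\, N \,\oplus\, R_B^\perp,
\]
where $N=\mathrm{span}\{\alpha\otimes\beta+\beta\otimes\alpha:\alpha\in V^*,\beta\in U^*\}\subseteq V^*\otimes U^*\oplus U^*\otimes V^*$. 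The inclusion $N\subseteq [V,U]^\perp$ is an immediate pairing computation (using $\alpha(u)=0=\beta(v)$ for $\alpha\in V^*,\beta\in U^*,v\in V,u\in U$), and a dimension count shows equality. This decomposition is essentially the quadratic-dual version of the identity $(A\otimes B)^!=A^!\hat\otimes B^!$ already cited.

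Next I would evaluate $\theta_h$ on each summand. Choose liftings $r_f\in V\otimes V$, $r_g\in U\otimes U$ of $f,g$; then $r_0:=r_f+r_g\in W\otimes W$ lifts $h$, and $\theta_h$ is computed from $r_0$ via \eqref{eqtheta}. Since elements of $R_A^\perp\subseteq V^*\otimes V^*$ annihilate $r_g\in U\otimes U$, we get $\theta_h|_{R_A^\perp}=\theta_f$, and symmetrically $\theta_h|_{R_B^\perp}=\theta_g$. For the cross term, $(\alpha\otimes\beta+\beta\otimes\alpha)(r_f+r_g)=0$ because $\alpha\in V^*$ kills $U$ and $\beta\in U^*$ kills $V$, so $\theta_h|_N=0$.

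Combining the two steps, the defining relations of $C_{(A\otimes B)^!}(\theta_h)$ in $T(V^*\oplus U^*)$ are exactly
\[
\{\,r-\theta_f(r):r\in R_A^\perp\},\;\{\,r-\theta_g(r):r\in R_B^\perp\},\;\{\,\alpha\beta+\beta\alpha:\alpha\in V^*,\beta\in U^*\}.
\]
On the other hand, in the $\mathbb Z_2$-graded twisted tensor product $C_{A^!}(\theta_f)\hat\otimes C_{B^!}(\theta_g)$, since every generator $\alpha\in V^*$ and $\beta\in U^*$ sits in $\mathbb Z_2$-degree $1$, the twisted multiplication yields $(\alpha\hat\otimes 1)(1\hat\otimes\beta)+(1\hat\otimes\beta)(\alpha\hat\otimes 1)=(1-1)\alpha\hat\otimes\beta$\,—\,wait, more precisely $(1\hat\otimes\beta)(\alpha\hat\otimes 1)=(-1)^{|\beta||\alpha|}\alpha\hat\otimes\beta=-\alpha\hat\otimes\beta$, giving the anti-commutation $\alpha\beta+\beta\alpha=0$. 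Together with the Clifford relations inherited from each tensor factor, this gives precisely the same presentation. Hence the identity on $V^*\oplus U^*$ extends to a well-defined surjective algebra map
\[
\Phi\colon C_{(A\otimes B)^!}(\theta_h)\longrightarrow C_{A^!}(\theta_f)\hat\otimes C_{B^!}(\theta_g),
\]
which is $\mathbb Z_2$-graded because both algebras inherit their grading from the tensor-length parity of $T(V^*\oplus U^*)$.

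Finally I would verify that $\Phi$ is an isomorphism by a dimension count: both algebras are finite dimensional strongly $\mathbb Z_2$-graded Frobenius by Theorem \ref{thm-HY}(ii), and one compares $\dim C_{(A\otimes B)^!}(\theta_h)=\dim(A\otimes B)^!=\dim A^!\cdot\dim B^!=\dim C_{A^!}(\theta_f)\cdot\dim C_{B^!}(\theta_g)$, the last equality because each Clifford deformation has the same dimension as the underlying Koszul Frobenius algebra (the associated graded recovers the original). I expect the main obstacle to be the bookkeeping in identifying the cross-term $N$ with the anti-commutation relations of the twisted tensor product and checking compatibility of signs; once this is done, the rest reduces to comparing presentations.
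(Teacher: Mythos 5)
Your proposal is correct and follows essentially the same route as the paper: decompose $R_{A\otimes B}^{\bot}=R_A^{\bot}\oplus[V^*,U^*]_+\oplus R_B^{\bot}$, check that $\theta_h$ restricts to $\theta_f$, $\theta_g$ and vanishes on the cross term, and match this with the defining relations of the twisted tensor product. Your closing surjectivity-plus-dimension-count step (using that a Clifford deformation preserves dimension) is a harmless extra verification of what the paper treats as an identification of presentations.
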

\begin{proof} Let $\pi_A:T(V)\to A$, $\pi_B:T(U)\to B$ and $\pi_{A\otimes B}:T(V\oplus U)\to A\otimes B$ be the natural projection maps. Pick elements $r_1\in V\otimes V$ and $r_2\in U\otimes U$ such that $\pi_A(r_1)=f$ and $\pi_B(r_2)=g$. Then $\pi_{A\otimes B}(r_1+r_2)=f+g=h$. Let us compute the generating relations of $C_{(A\otimes B)^!}(\theta_h)$. By (\ref{eq-cliff}), $C_{(A\otimes B)^!}(\theta_h)=T(V^*\oplus U^*)/I$, where $I$ is generated by the space $$\mathcal{R}:=\{\alpha-\theta_h(\alpha)|\alpha\in R^\bot_{A\otimes B}\}\subseteq (V^*\oplus U^*)\otimes(V^*\oplus U^*).$$ Note that $R_{A\otimes B}=R_A\oplus [V,U]\oplus R_B$. It follows $$R^\bot_{A\otimes B}=R^\bot_A\oplus [V^*,U^*]_+\oplus R_B^\bot,$$ where $R^\bot_A\subseteq V^*\otimes V^*$, $R^\bot_B\subseteq U^*\otimes U^*$ and $[V^*,U^*]_+=\{\alpha\otimes \beta+\beta\otimes \alpha|\alpha\in V^*,\beta\in U^*\}$. Hence
\begin{equation}\label{eq-r}
  \mathcal{R}=\{\alpha-\alpha(f)|\alpha\in R^\bot_A\}+\{\beta-\beta(g)|\beta\in R^\bot_B\}+[V^*,U^*]_+.
\end{equation}
On the other hand, $C_{A^!}(\theta_f)\hat\otimes C_{B^!}(\theta_g)=T(V^*\oplus U^*)/J$, where $J$ is generated by the space $$\mathcal{\hat R}:=\{\alpha-\theta_f(\alpha)|\alpha\in R^\bot_A\}+\{\beta-\theta_g(\beta)|\beta\in R^\bot_B\}+[V^*,U^*]_+.$$ Since $\theta_f(\alpha)=\alpha(f)$ and $\theta_g(\beta)=\theta(g)$ by (\ref{eqtheta}), it follows that $\mathcal{R}=\mathcal{\hat R}$. Hence
$$C_{(A\otimes B)^!}(\theta_h)\cong C_{A^!}(\theta_f)\hat\otimes C_{B^!}(\theta_g).$$
\end{proof}

Let $A$ be a noetherian graded algebra. Denote $\gr A$ for the category of finitely generated graded right $A$-modules, and $\tor A$ for the category of finite dimensional graded right $A$-modules. Let $\qgr A=\gr A/\tor A$. Recall that $A$ is called a {\it graded isolated singularity} if $\qgr A$ has finite global dimension. Lemma \ref{lem-clifford} implies the following result.

\begin{theorem}\label{thm-isolat} Let $A$ and $B$ be noetherian Koszul Artin-Schelter regular algebras. Suppose that $A\otimes B$ is also noetherian. Assume that $f\in A$ and $g\in B$ are central regular homogeneous elements of degree 2. Suppose that $B$ is a graded isolated singularity. Then $(A\otimes B)/(f+g)$ is a graded isolated singularity if and only if $A/(f)$ is.
\end{theorem}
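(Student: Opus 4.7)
The plan is to translate ``graded isolated singularity'' of a noncommutative quadric hypersurface into $\mathbb Z_2$-graded semisimplicity of the associated Clifford deformation, and then combine the factorisation of Lemma \ref{lem-clifford} with the cancellation built into Proposition \ref{prop-gl}.

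First I would invoke the characterisation, standard for noncommutative quadric hypersurfaces and implicit in Theorem \ref{thm-HY}(iii) together with \cite{SvdB}, that for a noetherian Koszul Artin--Schelter regular $S$ with a central regular $z\in S_2$, the quadric $S/(z)$ is a graded isolated singularity if and only if $C_{S^!}(\theta_z)_0$ is a semisimple finite dimensional algebra. By Theorem \ref{thm-HY}(ii) each $C_{S^!}(\theta_z)$ is finite dimensional and strongly $\mathbb Z_2$-graded, and for any strongly $\mathbb Z_2$-graded finite dimensional algebra $E$ the degree-$0$ functor yields an equivalence $\gr_{\mathbb Z_2} E\simeq \mathrm{mod}\,E_0$; hence $E_0$ is semisimple if and only if $E$ is $\mathbb Z_2$-graded semisimple. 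Applied to each of $A/(f)$, $B/(g)$ and $(A\otimes B)/(f+g)$, being a graded isolated singularity becomes equivalent to the corresponding Clifford deformation being $\mathbb Z_2$-graded semisimple.

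Next, Lemma \ref{lem-clifford} supplies the identification
\[ C_{(A\otimes B)^!}(\theta_{f+g})\cong C_{A^!}(\theta_f)\hat\otimes C_{B^!}(\theta_g). \]
The hypothesis that $B/(g)$ is a graded isolated singularity is, by the reduction above, exactly the hypothesis that $F:=C_{B^!}(\theta_g)$ is $\mathbb Z_2$-graded semisimple. The ``in particular'' clause of Proposition \ref{prop-gl} then gives that $C_{A^!}(\theta_f)\hat\otimes F$ is $\mathbb Z_2$-graded semisimple if and only if $C_{A^!}(\theta_f)$ is; unwinding the translation shows that $(A\otimes B)/(f+g)$ is a graded isolated singularity if and only if $A/(f)$ is, as required.

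The only non-mechanical step is the characterisation ``graded isolated singularity $\Leftrightarrow$ $C_{S^!}(\theta_z)_0$ semisimple,'' which I expect to be the main obstacle: it requires the correct prior result to be located, or otherwise reproduced by combining the equivalence $\underline{\mcm}\,S/(z)\simeq D^b(\mathrm{mod}\,C_{S^!}(\theta_z)_0)$ of Theorem \ref{thm-HY}(iii) with the singularity-category interpretation of $\qgr\,S/(z)$. Once that is in hand the rest of the argument is a purely formal assembly of Lemma \ref{lem-clifford}, the strong-grading reduction, and Proposition \ref{prop-gl}, and no further calculation is required.
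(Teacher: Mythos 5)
Your proposal is correct and follows essentially the same route as the paper: reduce ``graded isolated singularity'' to $\mathbb Z_2$-graded semisimplicity of the Clifford deformation, factorise via Lemma \ref{lem-clifford}, and cancel the semisimple factor with Proposition \ref{prop-gl}. The one step you flag as needing a reference is exactly \cite[Theorem 6.3]{HY}, which the paper cites and which states the characterisation directly (graded isolated singularity if and only if the Clifford deformation is $\mathbb Z_2$-graded semisimple), so no reconstruction from Theorem \ref{thm-HY}(iii) is needed.
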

\begin{proof} Let $h=f+g$. By Lemma \ref{lem-clifford}, we have an isomorphism of $\mathbb Z_2$-graded algebras
$$C_{(A\otimes B)^!}(\theta_h)\cong C_{A^!}(\theta_f)\hat\otimes C_{B^!}(\theta_g).$$
By assumption, $B/(f)$ is a graded isolated singularity. Hence, by \cite[Theorem 6.3]{HY}, $C_{B^!}(\theta_g)$ is a $\mathbb Z_2$-graded semisimple algebra.

By Proposition \ref{prop-gl}, $C_{(A\otimes B)^!}(\theta_h)$, which is isomorphic to $C_{A^!}(\theta_f)\hat\otimes C_{B^!}(\theta_g)$, is a $\mathbb Z_2$-graded semisimple algebra if and only if $C_{A^!}(\theta_f)$ is a $\mathbb Z_2$-graded semisimple algebra. Applying \cite[Theorem 6.3]{HY} again, we obtain that $(A\otimes B)/(f)$ is a graded isolated singularity if and only if $A/(f)$ is.
\end{proof}

\begin{definition} Let $A$ be a noetherian Koszul Artin-Schelter regular algebra, and let $f\in A_2$ be a central regular element. If the $\mathbb Z_2$-graded algebra $C_{A^!}(\theta_f)$ is a simple $\mathbb Z_2$-graded algebra, then we call $A/(f)$ is a {\it simple graded isolated singularity}.
\end{definition}

Since the ground field is $\mathbb C$, there are two classes of simple graded algebras: matrix algebras over the $\mathbb Z_2$-graded algebra $\mathbb {CG}$, and matrix algebras over the $\mathbb Z_2$-graded algebra $\mathbb C$. In the above definition, if $C_{A^!}(\theta_f)$ is a matrix algebra over the $\mathbb Z_2$-graded $\mathbb C$, then we further call $A/(f)$ is a {\it simple graded isolated singularity of 0-type}; if $C_{A^!}(\theta_f)$ is a matrix algebra over the $\mathbb Z_2$-graded $\mathbb {CG}$, then we further call $A/(f)$ is a {\it simple graded isolated singularity of 1-type}. It is not hard to see that, if both $A/(f)$ and  $B/(g)$ are simple graded isolated singularities of 1-type and $A\otimes B$ is also noetherian, then $(A\otimes B)/(f+g)$ is a simple graded isolated singularity of 0-type.

\begin{example}\label{ex-s1} Let $A=\mathbb C[x,y]$, and $f=x^2+y^2$. Then
$$C_{A^!}(\theta_f)\cong \mathbb C_{-1}[u,v]/(u^2-1,v^2-1)\cong \mathbb {C}G\hat\otimes \mathbb {C}G\cong \mathbb M_2(\mathbb C),$$
where $\mathbb M_2(\mathbb C)$ is the $2\times2$-matrix algebra which is viewed as a $\mathbb Z_2$-graded algebra by setting
$$\mathbb M_2(\mathbb C)_0=\left[
                                                                    \begin{array}{cc}
                                                                      \mathbb C & 0 \\
                                                                      0 & \mathbb C\\
                                                                    \end{array}
                                                                  \right],\quad \mathbb M_2(\mathbb C)_1=\left[
                                                                    \begin{array}{cc}
                                                                      0&\mathbb C  \\
                                                                      \mathbb C&0\\
                                                                    \end{array}
                                                                  \right].$$
Hence $\mathbb C[x,y]/(x^2+y^2)$ is a simple graded isolated singularity of 0-type.
\end{example}
\begin{example} \label{ex-s3} Let $A=\mathbb C[x]$, and $f=x^2$. Then
$$C_{A^!}(\theta_f)\cong \mathbb{C}[x]/(x^2-1)\cong\mathbb {C}G.$$
Therefore $A/(f)$ is a simple graded isolated singularity of 1-type.
\end{example}
\begin{example} \label{ex-s2} Let $A=\mathbb C_{-1}[x,y]$, and $f=x^2+y^2$. Then $$C_{A^!}(\theta_f)\cong \mathbb C[u,v]/(u^2-1,v^2-1)\cong \mathbb {CG}\times \mathbb {CG}.$$ Hence $A/(f)$ is not a simple graded isolated singularity.
\end{example}

\begin{lemma}\label{simp}
If $C_{A^{!}}(\theta_f)_0$ is a simple algebra, then $A/(f)$ is a simple graded isolated singularity of 1-type.
\end{lemma}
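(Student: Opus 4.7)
The plan is to first show that $E := C_{A^!}(\theta_f)$ is simple as a $\mathbb Z_2$-graded algebra, and then identify its type via the $\mathbb Z_2$-graded Wedderburn-Artin classification. Recall from Theorem \ref{thm-HY}(ii) that $E = E_0 \oplus E_1$ is finite dimensional and strongly $\mathbb Z_2$-graded, so in particular $E_1 E_1 = E_0$ (and hence $E_1 \neq 0$).

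For graded simplicity, I would take an arbitrary nonzero graded two-sided ideal $I = I_0 \oplus I_1 \subseteq E$ and observe that $I_0 = I \cap E_0$ is an ordinary two-sided ideal of the simple algebra $E_0$. Either $I_0 = E_0$, in which case $1 \in I$ and $I = E$, or $I_0 = 0$; in the latter case the containment $I_1 E_1 \subseteq I \cap E_0 = 0$ combined with strong gradedness yields $I_1 = I_1 \cdot 1 \subseteq I_1 (E_1 E_1) = (I_1 E_1) E_1 = 0$, so $I = 0$, a contradiction. This is the crux of the argument: the combination of ungraded simplicity of $E_0$ with strong gradedness is precisely what propagates simplicity from $E_0$ up to $E$. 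In particular $A/(f)$ is a simple graded isolated singularity.

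To see it is of $1$-type, I would invoke Lemma \ref{lem-z2}(ii) and Lemma \ref{lem-div} to conclude $E$ is a matrix algebra over either $\mathbb C$ (the $0$-type) or $\mathbb{CG}$ (the $1$-type). To rule out the $0$-type, I would recall that any $\mathbb Z_2$-grading on $M_n(\mathbb C)$ arises from a splitting of the row/column index set into a degree-$0$ block of size $p$ and a degree-$1$ block of size $q = n - p$, so a direct look at matrix units gives $E_0 \cong M_p(\mathbb C) \times M_q(\mathbb C)$ and $E_1$ as the off-diagonal blocks. Strong gradedness forces $E_1 \neq 0$, hence $p, q \geq 1$, and then $E_0$ is a product of two nonzero matrix algebras, contradicting the simplicity hypothesis on $E_0$. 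Therefore $E$ must be a matrix algebra over $\mathbb{CG}$, so $A/(f)$ is of $1$-type.

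No step is especially delicate; the main moving piece is the graded-simplicity reduction, where strong gradedness does the real work. The classification step afterwards is just a bookkeeping check against the list of $\mathbb Z_2$-gradings on matrix algebras provided by Lemmas \ref{lem-z2} and \ref{lem-div}.
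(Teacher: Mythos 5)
Your proof is correct and takes essentially the same route as the paper: simplicity of $C_{A^!}(\theta_f)_0$ together with strong gradedness yields graded simplicity, and the $0$-type is excluded because a matrix algebra over the $\mathbb Z_2$-graded $\mathbb C$ with nonzero degree-$1$ part has degree-$0$ part $M_p(\mathbb C)\times M_q(\mathbb C)$ with $p,q\ge 1$, which is not simple. The only difference is that you spell out the ideal-chasing argument for graded simplicity, which the paper merely asserts.
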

\begin{proof}
Since $C_{A^{!}}(\theta_f)_0$ is simple and $C_{A^{!}}(\theta_f)$ is strongly graded, we obtain that $C_{A^{!}}(\theta_f)$ is simple graded. If $C_{A^{!}}(\theta_f)$ is a matrix over $\mathbb{C}$, it must concentrate in degree 0 since $C_{A^{!}}(\theta_f)_0$ is simple. But in this case, $C_{A^{!}}(\theta_f)$ can not be strongly graded. Therfore, $C_{A^{!}}(\theta_f)$ must be a matrix over $\mathbb{CG}$.
\end{proof}

The next is an example of noncommutative simple graded isolated singularities.
\begin{example}  Let $A=\mathbb C\langle x_1,\dots,x_5\rangle/(r_1,\dots,r_{10})$, where the generating relations are as follows:
\begin{center}
  $r_1=x_1x_2-x_2x_1, r_2=x_1x_3+x_3x_1, r_3=x_1x_4+x_4x_1,$\\
   $r_4=x_1x_5+x_5x_1,r_5=x_2x_3-x_3x_2, r_6=x_2x_4+x_4x_1,$ \\
   $r_7=x_2x_5+x_5x_2, r_8=x_3x_4-x_4x_3,r_9=x_3x_5+x_5x_3,r_{10}=x_4x_5+x_5x_4$.
\end{center}
Let $f=x_1^2+\cdots+x_5^2$. By \cite[Section 5.4.2]{MU}, $\underline{\mcm}A/(f)\cong D^b(\mathbb C)$. Hence $C_{A^!}(\theta_f)_0$ is simple and $A/(f)$ is a simple graded isolated singularity of 1-type by Lemma \ref{simp}.
\end{example}

\begin{lemma}
Let $A$ be a quantum polynomial algebra of global dimension $n$, and let $f\in A_2$ be a central regular element, then $\dim C_{A^!}(\theta_f)=2^n$. Moreover

{\rm(i)} If $A/(f)$ is a simple graded isolated singularity of 0-type, then $n$ is even.

{\rm(ii)} If  $A/(f)$ is a simple graded isolated singularity of 1-type, then $n$ is odd.
\end{lemma}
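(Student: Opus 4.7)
The plan is to compute $\dim C_{A^!}(\theta_f)$ first, and then read off the parity of $n$ in each of the two simple cases from a dimension count against the classification of $\mathbb Z_2$-graded simple algebras.

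For the dimension, I would use Koszul duality. Since $A$ is a quantum polynomial algebra with Hilbert series $H_A(t)=1/(1-t)^n$, Koszulity gives $H_{A^!}(t)=(1+t)^n$, whence $\dim A^!=2^n$. The Clifford deformation is defined by the inhomogeneous relations $\{\alpha-\theta_f(\alpha)\mid\alpha\in R_A^\perp\}$, so it carries the natural filtration inherited from $T(V^*)$, whose associated graded is $A^!$. Flatness of this filtered deformation, which is part of the machinery established in \cite{HY} for Clifford deformations of Koszul Frobenius algebras, yields $\dim C_{A^!}(\theta_f)=\dim A^!=2^n$.

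For (i) and (ii), the hypothesis that $A/(f)$ is a simple graded isolated singularity says exactly that $C_{A^!}(\theta_f)$ is a simple $\mathbb Z_2$-graded algebra. Applying the $\mathbb Z_2$-graded Wedderburn--Artin theorem (Lemma \ref{lem-z2}(ii)) together with the classification of graded division algebras over $\mathbb C$ (Lemma \ref{lem-div}), $C_{A^!}(\theta_f)$ is a matrix algebra, with some partition-induced $\mathbb Z_2$-grading, over either $\mathbb C$ or $\mathbb{CG}$. In the 0-type case $C_{A^!}(\theta_f)\cong M_k(\mathbb C)$ has dimension $k^2$, so $k^2=2^n$ forces $k=2^{n/2}$ and hence $n$ even. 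In the 1-type case $C_{A^!}(\theta_f)\cong M_k(\mathbb{CG})$ has dimension $2k^2$ because $\dim\mathbb{CG}=2$, so $2k^2=2^n$ forces $k=2^{(n-1)/2}$ and hence $n$ odd.

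The only step that is not routine is the flatness of the Clifford deformation invoked in the dimension computation; I would simply cite \cite{HY} for this, after which the rest reduces to matching a power of two against the graded Wedderburn classification. As a sanity check, Example \ref{ex-s1} has $n=2$ and is of 0-type, while Example \ref{ex-s3} has $n=1$ and is of 1-type, both consistent with the parity conclusion.
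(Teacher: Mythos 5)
Your proposal is correct and follows essentially the same route as the paper: compute $\dim A^!=2^n$ from the Hilbert series $(1+t)^n$, use that the Clifford deformation preserves dimension (the paper states this directly, you justify it via the filtered/flat deformation from \cite{HY}), and then match $2^n$ against $\dim M_k(\mathbb C)=k^2$ or $\dim M_k(\mathbb{CG})=2k^2$ to get the parity of $n$. The only cosmetic difference is that you invoke the graded Wedderburn--Artin theorem, whereas the paper reads the matrix-algebra form directly off the definition of 0-type and 1-type.
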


\begin{proof}
Since $A$ has Hilbert series $1/(1-t)^n$, the Hilbert series of $A^!$ is $(1+t)^n$. Therefore, $\dim A^!=2^n$. Note that a Clifford deformation dose not change the dimension, we have $\dim C_{A^!}(\theta_f)=2^n$. If $A/(f)$ is a simple graded isolated singularity of 0-type, then $C_{A^!}(\theta_f)$ is a matrix over $\mathbb{C}$ and $\dim C_{A^!}(\theta_f)$ is a square of an integer, so $n$ must be even. If $A/(f)$ is a simple graded isolated singularity of 1-type, then $C_{A^!}(\theta_f)$ is a matrix over $\mathbb{CG}$, so $n$ must be odd.
\end{proof}

\begin{remark}\label{rmk-2}
We don't know by now when $A/(f)$ is a simple isolated singularity. It seems that the \textit{rank} of $f$ introduced by Mori-Ueyama in \cite{MU} is a candidate tool. Let $A$ be a quantum polynomial algebra of global dimension $n$, and let $f\in A_2$ be a central regular element. The \textit{rank} of $f$ (see  \cite[Definition 5.5]{MU}) is defined by
$$\mathrm{rank} f=\min\Big\{r\in\mathbb{N}^{+}\mid f=\sum_{i=1}^ru_iv_i,~u_i,v_i\in A_1\Big\}.$$
It has been proved that $C_{A^{!}}(\theta_f)_0$ has no non-zero modules of dimension less than $\mathrm{rank} f$ (see  \cite[Lemma 5.10]{MU}). Since $\dim C_{A^!}(\theta_f)_0=2^{n-1}$, if $n$ is odd and $\mathrm{rank}f\ge 2^{\frac{n-1}{2}}$, then $C_{A^!}(\theta_f)_0$ is a matrix over $\mathbb{C}$ and $A/(f)$ is a simple isolated singularity of 1-type by Lemma \ref{simp}. For example, we can choose $A=\mathbb{C}[x,y,z]$ and $f=x^2+y^2+z^2$, then $n=3$ and $\mathrm{rank}f=2$. In this case, $C_{A^!}(\theta_f)_0=M_2(\mathbb{C})$ and $C_{A^!}(\theta_f)=M_2(\mathbb{CG})$.
\end{remark}
\begin{proposition} Let $A$ be a noetherian Koszul Artin-Schelter regular algebra, and let $f\in A_2$ be a central regular element.
\begin{itemize}
\item [(i)] If $A/(f)$ is a simple graded isolated singularity of 0-type, then $A/(f)$ has two indecomposable nonprojective graded Cohen-Macaulay modules (up to isomorphisms and degree shifts);
\item [(ii)] If $A/(f)$ is a simple graded isolated singularity of 1-type, then $A/(f)$ has one indecomposable nonprojective graded Cohen-Macaulay module (up to isomorphisms and degree shifts).
\end{itemize}
\end{proposition}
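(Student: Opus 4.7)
The plan is to reduce the problem to counting simple modules of a semisimple algebra via Theorem~\ref{thm-HY}(iii), which gives the triangulated equivalence
$\underline{\mcm}A/(f)\cong D^b(\mathrm{mod}\,C_{A^!}(\theta_f)_0)$. First I would pin down $C_{A^!}(\theta_f)_0$ in each case. In the 0-type case $C_{A^!}(\theta_f)\cong M_m(\mathbb C)$ is a simple $\mathbb Z_2$-graded algebra and is strongly graded by Theorem~\ref{thm-HY}(ii), so the $\mathbb Z_2$-grading on $\End(\mathbb C^m)$ must come from a non-trivial decomposition $\mathbb C^m = V_0\oplus V_1$ with $V_0,V_1\ne 0$; hence $C_{A^!}(\theta_f)_0\cong \End(V_0)\times \End(V_1)$ is semisimple with exactly two isomorphism classes of simple modules. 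In the 1-type case $C_{A^!}(\theta_f)\cong M_m(\mathbb{CG})$ has degree-zero part $M_m(\mathbb{CG}_0)\cong M_m(\mathbb C)$, a simple algebra with a unique simple module.

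Since $C_{A^!}(\theta_f)_0$ is semisimple, the indecomposable objects of $D^b(\mathrm{mod}\,C_{A^!}(\theta_f)_0)$ are precisely the suspension shifts $S[n]$ of simple modules $S$, and any triangulated autoequivalence must permute this set. Under the equivalence, degree shift $(1)$ on $\underline{\mcm}A/(f)$ corresponds to some triangulated autoequivalence $\sigma$. I would pin down $\sigma$ via the hypersurface $2$-periodicity: because $f$ is a central regular element of degree $2$, the noncommutative matrix-factorization description of $\mcm A/(f)$ gives $\Omega^2_{A/(f)}\cong (-2)$, equivalently $[2]\cong (2)$ on $\underline{\mcm}A/(f)$, and this transports through the equivalence to the relation $\sigma^2\cong [2]$.

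Given the rigidity imposed by $\sigma^2\cong[2]$, a short direct analysis of the action of $\sigma$ on $\{S[n]\}$ then yields the orbit count: in the 0-type case (two simples $S_1,S_2$), whether $\sigma$ fixes the isomorphism classes of simples individually or interchanges them, the orbits of $\{S_i[n]\}$ partition into exactly two classes; in the 1-type case (one simple $S$), one checks that $\sigma$ acts on $\{S[n]\}$ with a single orbit. Translating back through the equivalence, these orbits correspond bijectively to isomorphism-and-degree-shift classes of indecomposable non-projective graded Cohen--Macaulay modules of $A/(f)$, giving the asserted two and one respectively. The main obstacle is establishing the relation $\sigma^2\cong[2]$, i.e.\ transporting the hypersurface $2$-periodicity through the Koszul--Clifford equivalence of Theorem~\ref{thm-HY}(iii); this requires a careful tracing of the construction in \cite{HY,SvdB} together with the matrix-factorization picture for $\mcm A/(f)$, but once it is in place the orbit count immediately yields the proposition.
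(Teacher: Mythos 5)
Your first half is exactly the paper's argument: in the $0$-type case the paper writes $C_{A^!}(\theta_f)\cong\End_{\mathbb C}(\mathbb C^s\oplus\mathbb C(1)^t)$ and uses the strong grading from Theorem \ref{thm-HY}(ii) to force $s,t\ge 1$ (your $V_0,V_1\neq 0$), giving $C_{A^!}(\theta_f)_0\cong M_s(\mathbb C)\times M_t(\mathbb C)$ with two simples; in the $1$-type case both of you get $M_m(\mathbb C)$ with one simple, and both then invoke the equivalence $\underline{\mcm}A/(f)\cong D^b(\mathrm{mod}\,C_{A^!}(\theta_f)_0)$. Where you diverge is the last step: the paper passes directly from ``number of simples of $C_{A^!}(\theta_f)_0$'' to ``number of indecomposable nonprojective MCM modules up to isomorphism and degree shift,'' leaving the bookkeeping of the shift functor implicit, whereas you make it explicit by transporting $(1)$ to an autoequivalence $\sigma$ and counting orbits of the shifted simples; your case analysis (whether or not $\sigma$ permutes the two simples in case (i), and $\sigma(S)\cong S[d]$ with $2d=2$ forcing $d=1$ in case (ii)) is correct and actually supplies a justification the paper omits. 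The one point to fix is your assessment of the ``main obstacle'': establishing $\sigma^2\cong[2]$ does not require tracing the construction of the equivalence in \cite{HY,SvdB}. The $2$-periodicity of maximal Cohen--Macaulay modules over a quadric hypersurface, $\Omega^2 M\cong M(-2)$ (equivalently $(2)\cong[2]$ on $\underline{\mcm}A/(f)$), is a standard consequence of the (noncommutative) matrix factorization picture, cf. \cite{SvdB,CKMW,MU1}, and since the equivalence of Theorem \ref{thm-HY}(iii) is triangulated, conjugation transports $[2]$ to $[2]$, so $\sigma^2\cong[2]$ follows at once. With that citation in place your proof is complete, and arguably more careful than the one in the paper.
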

\begin{proof} (i) Since $A/(f)$ is a simple graded isolated singularity of 0-type, then $C_{A^!}(\theta_f)$ is a matrix algebra over the $\mathbb Z_2$-graded algebra $\mathbb C$. Then $C_{A^!}(\theta_f)\cong \End_{\mathbb C}(\mathbb C^{s}\oplus (\mathbb {C}(1))^t)$ for some $s,t\ge1$, where $\mathbb C(1)$ is the graded $\mathbb Z_2$-graded $\mathbb C$-module by putting $\mathbb C$ in degree 1. By Theorem \ref{thm-HY}(ii), $C_{A^!}(\theta_f)$ is a strongly $\mathbb Z_2$-graded algebra. Hence $t\neq0$. Therefore $C_{A^!}(\theta_f)_0\cong M_s(\mathbb {C})\times M_t(\mathbb C)$, where $M_s(\mathbb C)$ (resp. $M_t(\mathbb C)$) is the $s\times s$ (resp. $t\times t$) matrix algebra over the field $\mathbb C$. By Theorem \ref{thm-HY}(iii), $\underline{\mcm} A/(f)\cong D^b(C_{A^!}(\theta_f)_0)$ since $C_{A^!}(\theta_f)$ is strongly graded. Since $C_{A^!}(\theta_f)_0$ has two nonisomorphic simple modules, $A/(f)$ has two nonprojective indecomposable graded Cohen-Macaulay modules (up to isomorphisms and degree shifts).

(ii) If $A/(f)$ is a simple graded isolated singularity of 1-type, then $C_{A^!}(\theta_f)$ is a matrix algebra over the $\mathbb Z_2$-graded algebra $\mathbb{CG}$. Then $C_{A^!}(\theta_f)_0$ is a matrix algebra over $\mathbb C$. Therefore $C_{A^!}(\theta_f)_0$ has one nonisomorphic simple module. By Theorem \ref{thm-HY}(iii) again, $A/(f)$ has one nonprojective indecomposable graded Cohen-Macaulay module (up to isomorphisms and degree shifts).
\end{proof}

\begin{theorem}\label{thm-k} Let $A$ and $B$ be noetherian Koszul Artin-Schelter regular algebras, and let $f\in A_2$ and $g\in B_2$ be central regular elements. Suppose that $A\otimes B$ is noetherian.
\begin{itemize}
  \item [(i)] If $B/(g)$ is a simple graded isolated singularity of 0-tpye, then there are equivalences of triangulated categories $$\underline{\mcm}(A\otimes B)/(f+g)\cong D^b(\text{\rm mod}C_{A^!}(\theta_f)_0)\cong\underline{\mcm}A/(f);$$
  \item [(ii)] If $B/(g)$ is a simple graded isolated singularity of 1-type, there is an equivalence of triangulated categories $$\underline{\mcm}(A\otimes B)/(f+g)\cong D^b(\text{\rm mod}C_{A^!}(\theta_f)).$$
\end{itemize}
\end{theorem}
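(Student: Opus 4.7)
The plan is to chain together three reductions: Theorem \ref{thm-HY}(iii), which identifies $\underline{\mcm}$ with a $\mathbb{Z}_2$-graded derived category; the tensor identity of Lemma \ref{lem-clifford}; and the graded Morita equivalences available when one factor is simple graded.

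\textbf{Step one.} Apply Theorem \ref{thm-HY}(iii) to $(A\otimes B)/(f+g)$, which is a noetherian Koszul Artin--Schelter Gorenstein algebra with central regular quadratic defining element $f+g$. This gives
$$\underline{\mcm}(A\otimes B)/(f+g) \cong D^b\bigl(\gr_{\mathbb Z_2} C_{(A\otimes B)^!}(\theta_{f+g})\bigr).$$
Lemma \ref{lem-clifford} then replaces $C_{(A\otimes B)^!}(\theta_{f+g})$ with $C_{A^!}(\theta_f)\hat\otimes C_{B^!}(\theta_g)$.

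\textbf{Step two.} The hypothesis on $B/(g)$ says that $C_{B^!}(\theta_g)$ is, as a $\mathbb Z_2$-graded algebra, a matrix algebra over the graded division algebra $\mathbb C$ in case (i) and over $\mathbb{CG}$ in case (ii). In either case it is the graded endomorphism algebra of a free graded module over that division algebra (with possibly shifted generators), hence is graded Morita equivalent to the underlying division algebra itself; this is the content of the $\mathbb Z_2$-graded Wedderburn--Artin theorem (Lemma \ref{lem-z2}(ii)). By Lemma \ref{lem-z2}(i), tensoring with $C_{A^!}(\theta_f)$ preserves graded Morita equivalence, so $C_{A^!}(\theta_f)\hat\otimes C_{B^!}(\theta_g)$ is graded Morita equivalent to $C_{A^!}(\theta_f)\hat\otimes \mathbb C \cong C_{A^!}(\theta_f)$ in case (i), and to $C_{A^!}(\theta_f)\hat\otimes \mathbb{CG}$ in case (ii). Graded Morita equivalent algebras share the same $D^b(\gr_{\mathbb Z_2}-)$.

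\textbf{Step three.} By Theorem \ref{thm-HY}(ii), $C_{A^!}(\theta_f)$ is strongly $\mathbb Z_2$-graded, and so is its twisted tensor product with $\mathbb{CG}$. In case (i), the standard equivalence $\gr_{\mathbb Z_2} E \simeq \text{\rm mod}\, E_0$ for strongly graded $E$ gives
$$D^b\bigl(\gr_{\mathbb Z_2} C_{A^!}(\theta_f)\bigr) \cong D^b\bigl(\text{\rm mod}\, C_{A^!}(\theta_f)_0\bigr),$$
and the right-hand side equals $\underline{\mcm}A/(f)$ by a second invocation of Theorem \ref{thm-HY}(iii). In case (ii), Proposition \ref{prop-str} directly yields $\gr_{\mathbb Z_2}\bigl(C_{A^!}(\theta_f)\hat\otimes \mathbb{CG}\bigr) \simeq \text{\rm mod}\, C_{A^!}(\theta_f)$, whence the desired equivalence $D^b(\text{\rm mod}\, C_{A^!}(\theta_f))$.

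The main point of care is step two, where ``matrix algebra over the $\mathbb Z_2$-graded $\mathbb C$ (resp.\ $\mathbb{CG}$)'' must be read in the $\mathbb Z_2$-graded sense, allowing shifted generators; the graded Morita equivalence to the underlying division algebra then has to be set up explicitly before Lemma \ref{lem-z2}(i) can be applied to transport it across $C_{A^!}(\theta_f)\hat\otimes(-)$. Everything else is a routine concatenation of equivalences already available in the preceding sections.
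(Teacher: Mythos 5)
Your proposal is correct and follows essentially the same route as the paper's own proof: Theorem \ref{thm-HY}(iii) plus Lemma \ref{lem-clifford} to pass to $D^b\bigl(\gr_{\mathbb Z_2}\bigl(C_{A^!}(\theta_f)\hat\otimes C_{B^!}(\theta_g)\bigr)\bigr)$, then Lemma \ref{lem-z2} to replace $C_{B^!}(\theta_g)$ by $\mathbb C$ or $\mathbb{CG}$, and finally the strongly graded equivalence (resp.\ Proposition \ref{prop-str}) to land in $D^b(\text{\rm mod}\,C_{A^!}(\theta_f)_0)$ or $D^b(\text{\rm mod}\,C_{A^!}(\theta_f))$. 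No gaps; your added care in step two about shifted generators in the graded matrix algebra is a reasonable refinement of the same argument.
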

\begin{proof} (i) Since $B/(g)$ is a simple graded isolated singularity of $0$-type, $C_{B^!}(\theta_g)$ is a matrix algebra over the $\mathbb Z_2$-graded algebra $\mathbb C$, and hence is graded Morita equivalent to $\mathbb C$. By Lemma \ref{lem-z2}, $C_{A^!}(\theta_f)\hat\otimes C_{B^!}(\theta_g)$ is graded Morita equivalent to $C_{A^!}(\theta_f)\hat\otimes\mathbb C\cong C_{A^!}(\theta_f)$, and the later one is isomorphic to $C_{A^!}(\theta_f)$. Hence by Theorem \ref{thm-HY}(iii) and Lemma \ref{lem-clifford}, we have equivalences of triangulated categories
\begin{eqnarray*}
  \underline{\mcm}(A\otimes B)/(f+g)&\cong& D^b(\gr_{\mathbb Z_2} C_{(A\otimes B)^!}(\theta_{f+g}))\\
&\cong& D^b\left(\gr_{\mathbb Z_2}\left(C_{A^!}(\theta_f)\hat\otimes C_{B^!}(\theta_g)\right)\right)\\
&\cong& D^b(\gr_{\mathbb Z_2}C_{A^!}(\theta_f))\\
&\cong&\underline{\mcm}A/(f).
\end{eqnarray*}
Since $C_{A^!}(\theta_f)$ is a strongly $\mathbb Z_2$-graded, $D^b(\gr_{\mathbb Z_2}C_{A^!}(\theta_f))\cong D^b(\text{mod}C_{A^!}(\theta_f)_0)$ as triangulated categories. Hence the statement (i) follows.

(ii) As in the proof of (i), we have an equivalence of triangulated categories $$\underline{\mcm}(A\otimes B)/(f+g)\cong D^b\left(\gr_{\mathbb Z_2}\left(C_{A^!}(\theta_f)\hat\otimes C_{B^!}(\theta_g)\right)\right).$$ Since $B/(g)$ is a simple graded isolated singularity of 1-type, $C_{B^!}(\theta_g)$ is a matrix algebra over the $\mathbb Z_2$-graded algebra $\mathbb {CG}$, and hence is graded Morita equivalent to $\mathbb{CG}$. By Lemma \ref{lem-clifford} again, $D^b\left(\gr_{\mathbb Z_2}\left(C_{A^!}(\theta_f)\hat\otimes C_{B^!}(\theta_g)\right)\right)\cong D^b\left(\gr_{\mathbb Z_2}\left(C_{A^!}(\theta_f)\hat\otimes \mathbb {CG}\right)\right)$. By Proposition \ref{prop-str}, $D^b\left(\gr_{\mathbb Z_2}\left(C_{A^!}(\theta_f)\hat\otimes \mathbb {CG}\right)\right)\cong D^b(\text{mod}C_{A^!}(\theta_f))$. Hence the statement (ii) follows.
\end{proof}

\begin{remark}\label{rmk} Theorem \ref{thm-k}(i) may be viewed as a generalization of Kn\"{o}rrer's periodicity theorem (see  \cite[Theorem 3.1]{K}, which has been generalized to noncommutative algebras in \cite{CKMW,HY,MU}). Indeed, let $B=\mathbb{C}[x,y]$ and $g=x^2+y^2$. By Example \ref{ex-s1}, $B/(g)$ is a simple graded isolated singularity of 0-type. Let $A$ be a noetherian Koszul Artin-Schelter regular algebra, and let $f\in A_2$ be a central regular element. The second double branch cover of $A/(f)$ is defined to be the quotient algebra \cite{K,CKMW} $$(A/(f))^{\#\#}=A[x,y]/(f+x^2+y^2).$$ Since $A[x,y]\cong A\otimes B$ is noetherian and $A[x,y]/(f+x^2+y^2)\cong (A\otimes B)/(f+g)$, by Theorem \ref{thm-k}(i), $\underline{\mcm}(A/(f))^{\#\#}\cong\underline{\mcm}A/(f)$.
\end{remark}

\section{Double branch covers of noncommutative conics}
Noncommutative conics in Calabi-Yau quantum planes were recently classified by Hu-Matsuno-Mori in \cite{HMM}. In this section, we will study the double branch covers of the noncommutative conics obtained in \cite[Corollary 3.8]{HMM}.

We say that $A/(f)$ is a \textit{noncommutative conic} (see  \cite[Definition 1.3]{HMM}) if $A$ is a
3-dimensional Calabi-Yau quantum polynomial algebra, and $f\in A_2$ is a central regular element. The double branch cover of $A/(f)$ is defined to be
$$(A/(f))^\#=A[x]/(f+x^2).$$
By Theorem \ref{thm-k}(ii), we also have
$$\underline{\mcm}(A/(f))^\#\cong D^b(\mathrm{mod}C_{A^!}(\theta_f)).$$

If $A/(f)$ is commutative, by \cite[Corollary 3.8(i)]{HMM}, $A/(f)$ is isomorphic to
one of the following algebras:
$$\mathbb{C}[x,y,z]/(x^2),\quad \mathbb{C}[x,y,z]/(x^2+y^2),\quad \mathbb{C}[x,y,z]/(x^2+y^2+z^2),$$
and we have the following table:

\begin{table}[!htbp]
	\centering
	\caption{Commutative case.}
	\begin{tabular}{|c|c|c|}
		\hline
		$A/(f)$&$C_{A^!}(\theta_f)$&$C_{A^!}(\theta_f)_0$\\ \hline
		$\mathbb{C}[x,y,z]/(x^2)$ & $\mathbb{C}_{-1}[x,y,z]/(x^2-1,y^2,z^2)$&$\bigwedge(u,v)$\\ \hline
		$\mathbb{C}[x,y,z]/(x^2+y^2)$&$\mathbb{C}_{-1}[x,y,z]/(x^2-1,y^2-1,z^2)$&$\mathbb{C}_{-1}[u,v]/(u^2-1,v^2)$ \\ \hline
		$\mathbb{C}[x,y,z]/(x^2+y^2+z^2)$&$M_2(\mathbb{CG})$&$M_2(\mathbb{C})$\\
\hline
\end{tabular}
\end{table}

If $A/(f)$ is noncommutative, by \cite[Corollary 3.8(ii)]{HMM}, $A/(f)$ is isomorphic to
$$S^{(\alpha,\beta,\gamma)}/(ax^2+by^2+cz^2)$$
for some $\alpha,\beta,\gamma\in\mathbb{C}$ and $(a,b,c)\in \mathbb{P}^2$, where
$$S^{(\alpha,\beta,\gamma)}=\mathbb{C}\langle x,y,z\rangle/(yz+zy+\alpha x^2, zx+xz+\beta y^2,xy+yx+\gamma z^2).$$
In this case, $C_{A^!}(\theta_f)$ is commutative.

Next, we focus on the noncommutative case. We need some preparations.

Let $G$ be a finite group with identity $e$, and let $E$ be a finited dimensional $G$-graded algebra.
A \textit{$G$-element} $x$ is an invertible homogeneous element of $E$ such that $x^i\mapsto |x|^i$ is an injective group homomorphism from $\langle x\rangle$ to $G$. Here, we use $|x|$ to denote the degree of $x$ and $\langle x\rangle$ is a cyclic group generated by $x$ via multiplication in $E$.

\begin{example}
$\mathbb{C}[x]/(x^n)$ graded by $\mathbb{Z}_n$ has no $\mathbb{Z}_n$-element for positive degrees and $\mathbb{C}G$ graded by $G$ has $G$-elemets for every degree.
\end{example}

\begin{example}
Let $E=M_3(\mathbb{C})$ graded by $\mathbb{Z}_2$, where
$$E_0=\left[\begin{matrix}
\mathbb{C}&\mathbb{C}&0\\
\mathbb{C}&\mathbb{C}&0\\
0&0&\mathbb{C}
\end{matrix}\right],\quad E_1=\left[\begin{matrix}
0&0&\mathbb{C}\\
0&0&\mathbb{C}\\
\mathbb{C}&\mathbb{C}&0
\end{matrix}\right].$$
Then $E$ is strongly graded ($E_1E_1=E_0$), but $E$ has no $\mathbb{Z}_2$-element in degree 1.
\end{example}
\begin{remark}\label{rmk-1}
Let $x\in E$ be a $G$-element. Consider the skew group algebra $E_e\#\langle x\rangle$: as a vector space $E_e\#\langle x\rangle=E_e\otimes \mathbb{C}\langle x\rangle$, and the
 multiplication of $E_e\#\langle x\rangle$ is defined by
$$(a_e\#g)(b_e\#h)=a_e(gb_eg^{-1})\#gh.$$
One sees that $E_e\#\langle x\rangle$ may be regarded as a graded subalgebra of $E$ via $(a_e,x^i)\mapsto a_ex^i$. The
Moreover, if $G \cong\langle x\rangle$, then $E\cong E_e\#\langle x\rangle$.
\end{remark}
\begin{lemma}\label{lem-lift}
Assume that $I$ is a nilpotent homogeneous ideal of $E$, then any $G$-element $\overline{x}$ of $E/I$ can be lifted into $E$.
\end{lemma}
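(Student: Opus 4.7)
The plan is to pick any homogeneous lift $y \in E_{|\overline{x}|}$ of $\overline{x}$, then rescale $y$ by a correction $v \in \mathbb{C}[y^n]$ so that $x := yv$ satisfies $x^n = 1$; this $x$ will be the desired $G$-element. The first step is to show $y$ is invertible: if $z$ is a homogeneous lift of $\overline{x}^{-1}$, then $yz = 1 + i$ with $i \in I_e$ nilpotent, so $1 + i$ is invertible via the terminating Neumann series, hence $y$ has a right inverse; symmetrically a left inverse, so $y \in E^{\times}$.

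Next, let $n$ be the order of $|\overline{x}|$ in the finite group $G$. Since $\overline{x}^n = 1$ in $E/I$ by the $G$-element property, we have $y^n = 1 + j$ for some nilpotent $j \in I_e$. Working inside the commutative subalgebra $\mathbb{C}[y^n] \subseteq E_e$, I would use the binomial series
\[
v \;=\; (1+j)^{-1/n} \;=\; \sum_{k \ge 0} \binom{-1/n}{k} j^k,
\]
which terminates because $j$ is nilpotent and defines an element of $\mathbb{C}[y^n]$ satisfying $v^n = (1+j)^{-1}$ and $v \equiv 1 \pmod{I}$. Because $v$ is a polynomial in $y^n$, we have $yv = vy$, so setting $x = yv$ gives
\[
x^n \;=\; y^n v^n \;=\; (1+j)(1+j)^{-1} \;=\; 1.
\]

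To conclude that $x$ is a $G$-element lifting $\overline{x}$, note that $x$ is homogeneous of degree $|\overline{x}|$ and reduces to $\overline{x}$ modulo $I$ (since $v \equiv 1 \pmod{I}$), and $x$ is invertible with $x^{-1} = x^{n-1}$. For $0 < k < n$, the class of $x^k$ in $E/I$ equals $\overline{x}^k \neq 1$ (as $\overline{x}$ has order exactly $n$ in the unit group of $E/I$), so $x^k \neq 1$ in $E$; combined with $x^n = 1$ this shows $x$ has order exactly $n$, and therefore $x^i \mapsto |x|^i$ is a well-defined injective group homomorphism $\langle x\rangle \to G$.

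The main obstacle---and really the whole content of the argument---is that the correction factor $v$ used to kill the defect $y^n - 1 = j$ must commute with $y$ so that the identity $(yv)^n = y^n v^n$ holds in the possibly noncommutative algebra $E$. Choosing $v$ inside the commutative subalgebra generated by $y^n$, where the nilpotency of $j$ supplies a formal $n$-th root via the terminating binomial series, is what bypasses this difficulty.
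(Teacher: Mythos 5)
Your proof is correct, and its engine is the same one the paper uses: correct the naive homogeneous lift $y$ by a factor lying in the commutative subalgebra $\mathbb{C}[y^n]$, which is congruent to $1$ modulo $I$ and commutes with $y$, so that the $n$-th power becomes exactly $1$. The difference is in execution. The paper only writes down the first-order correction $x(1-r/n)$, which works when $I^2=0$, and then reaches the general nilpotent case by induction along the square-zero extensions $E/I^{2^k}\to E/I^{2^{k-1}}$; you instead take the full terminating binomial series $v=(1+j)^{-1/n}$ (legitimate over $\mathbb{C}$, i.e.\ in characteristic zero, with $j$ nilpotent and central in $\mathbb{C}[y^n]$) and finish in one step, eliminating the induction entirely. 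Your version also spells out details the paper leaves implicit: that the lift is homogeneous of the right degree, reduces to $\overline{x}$, and is genuinely a $G$-element because $x^k\neq 1$ for $0<k<n$ (its image is $\overline{x}^k\neq 1$), so $x$ has order exactly $n$, matching the order of $|x|$ in $G$. One small remark: the opening step showing that $y$ is invertible is superfluous, since $x^n=1$ already makes the corrected lift invertible; it does no harm, but you could delete it.
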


\begin{proof}
Assume $x\in E$ is a homogemeous preimage of $\overline{x}$. Since $\overline{x}^n=\overline{1}$ for some $n\ge0$, there is an element $r\in I$ such that $x^n=1+r$. Note that $r$ is a nilpotent element of degree $e$, so $1+r$ is invertible.

More precisely, since $|x^n|=e$, $r=x^n-1\in I_e$ and commutative with $x$. If $I^2=0$, then $x':=x(1-r/n)$ is a lifting of $\overline{x}$. In fact, $x'$ is homogeneous,
$$x'^n=x^n(1-r/n)^n=(1+r)(1-r)=1$$
and $x-x'=(x/n)r\in I$.

 For general case, the result follows from the induction on the exact sequence
$$0\rightarrow I^{2^{k-1}}/I^{2^k}\rightarrow E/I^{2^k}\rightarrow E/I^{2^{k-1}}\rightarrow0.$$
\end{proof}

\begin{proposition}\label{prop-copy}
Let $E$ be a finite dimensional commutative algebra graded by $\mathbb{Z}_2$, if $A$ is strongly graded, then $A\cong A_0\times A_0$ as ungraded algebras.
\end{proposition}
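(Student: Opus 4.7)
The plan is to produce an invertible degree-one element $x\in E_1$ with $x^2=1$, and then use it to manufacture central idempotents that split $E$ into two copies of $E_0$ as ungraded algebras.

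To find $x$, I would first pass to the quotient by the graded Jacobson radical. Since $E$ is finite dimensional, $J^g(E)$ is a nilpotent two-sided graded ideal and $E/J^g(E)$ is graded semisimple. By the $\mathbb{Z}_2$-graded Wedderburn-Artin theorem (Lemma \ref{lem-z2}(ii)), together with the fact that commutativity forces all matrix factors to be $1\times 1$, and Lemma \ref{lem-div} (which classifies graded division algebras over $\mathbb{C}$), we have $E/J^g(E)\cong\prod_{i=1}^k D_i$ with each $D_i\in\{\mathbb{C},\mathbb{CG}\}$. Strong grading clearly descends to the quotient, and a $D_i=\mathbb{C}$ factor (whose degree-$1$ part is zero) would violate strong grading; hence $E/J^g(E)\cong(\mathbb{CG})^k$. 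The element $\overline{x}=(\sigma,\ldots,\sigma)\in(E/J^g(E))_1$ is then a $\mathbb{Z}_2$-element satisfying $\overline{x}^2=1$, and Lemma \ref{lem-lift} provides a lift to a $\mathbb{Z}_2$-element $x\in E_1$ with $x^2=1$.

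Since $E$ is commutative, $x$ is automatically central, so $e_{\pm}=(1\pm x)/2$ are orthogonal central idempotents with $e_++e_-=1$ and $e_\pm^2=e_\pm$. This yields a decomposition $E\cong e_+E\times e_-E$ as (ungraded) algebras. To identify each factor with $E_0$, I would check that $E_0\to e_+E$, $a\mapsto e_+a$, is an algebra isomorphism: the identity $e_+y_1=(y_1+xy_1)/2=e_+(xy_1)$ for $y_1\in E_1$ (which uses $x^2=1$) gives $e_+(y_0+y_1)=e_+(y_0+xy_1)$ with $y_0+xy_1\in E_0$, hence surjectivity; injectivity is immediate from $E_0\cap E_1=0$. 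A symmetric argument handles $e_-E$.

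The main obstacle is Step 1, namely showing that the graded semisimple quotient $E/J^g(E)$ must already be a power of $\mathbb{CG}$; once this is in place, and Lemma \ref{lem-lift} is invoked to preserve the square-one relation during lifting, the rest of the proof is routine bookkeeping with idempotents.
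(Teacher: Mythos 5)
Your proposal is correct and follows essentially the same route as the paper: pass to the graded semisimple quotient, use strong grading to see it is a power of $\mathbb{CG}$, lift a degree-one element with square $1$ via Lemma \ref{lem-lift}, and then split $E$ using that element. The only cosmetic difference is in the last step, where you split directly with the central idempotents $(1\pm x)/2$, while the paper identifies $E$ with the subalgebra $E_0\#\langle x\rangle\cong E_0[t]/(t^2-1)\cong E_0\times E_0$; these are the same argument.
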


\begin{proof}
Since $E$ is finite dimensional, the graded radical $J^g(E)$ is nilpotent and $E/J^g(E)$ is graded semisimple. The condition that $E$ is strongly graded implies that $E/J^g(E)$ is also strongly graded, hence $E/J^g(E)$ is a product of some copies of $\mathbb{CG}$ by Lemma \ref{lem-z2}(ii) and has a $\mathbb{Z}_2$-element in degree 1. By Lemma \ref{lem-lift}, this $\mathbb{Z}_2$-element can be lifted into $E$, denoted by $g$. Now, $E_0\#\langle g\rangle$ is a subalgebra of $E$, and as ungraded algebras
$$E_0\#\langle g\rangle\cong E_0[x]/(x^2-1)\cong E_0\times E_0.$$
For surjective, note that if $a_1\in E_1$, then $a_1=(a_1g)g\in E_0\#\langle g\rangle$.
\end{proof}

\begin{corollary}\label{cor}
Let $A/(f)$ be a noncommutative conic. If $A/(f)$ is noncommutative, then
\begin{align*}
\underline{\mcm}(A/(f))^{\#}\cong \underline{\mcm}(A/(f))\times \underline{\mcm}(A/(f)).
\end{align*}
\end{corollary}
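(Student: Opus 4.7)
The plan is to combine Theorem \ref{thm-k}(ii) with the structural decomposition provided by Proposition \ref{prop-copy}. First I would set $B=\mathbb{C}[x]$ and $g=x^2$. By Example \ref{ex-s3}, $B/(g)$ is a simple graded isolated singularity of 1-type, and obviously $A\otimes B = A[x]$ is noetherian. Since $(A/(f))^{\#} = A[x]/(f+x^2) \cong (A\otimes B)/(f+g)$, applying Theorem \ref{thm-k}(ii) immediately gives
$$\underline{\mcm}(A/(f))^{\#}\cong D^b(\text{mod}\,C_{A^!}(\theta_f)),$$
which is already recorded at the beginning of this section.

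Next I would use the noncommutativity hypothesis. From the classification preceding this corollary (the discussion of \cite[Corollary 3.8(ii)]{HMM}), whenever $A/(f)$ is a noncommutative conic the associated Clifford deformation $C_{A^!}(\theta_f)$ is commutative. On the other hand, by Theorem \ref{thm-HY}(ii) the algebra $C_{A^!}(\theta_f)$ is always a finite dimensional strongly $\mathbb{Z}_2$-graded algebra. These are exactly the hypotheses of Proposition \ref{prop-copy}, which therefore yields the decomposition
$$C_{A^!}(\theta_f)\cong C_{A^!}(\theta_f)_0\times C_{A^!}(\theta_f)_0$$
as ungraded algebras.

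Finally I would translate this splitting through derived categories: a product of algebras produces a product of module categories, and hence of their bounded derived categories, so
$$D^b(\text{mod}\,C_{A^!}(\theta_f))\cong D^b(\text{mod}\,C_{A^!}(\theta_f)_0)\times D^b(\text{mod}\,C_{A^!}(\theta_f)_0).$$
Theorem \ref{thm-HY}(iii), applied together with the fact that $C_{A^!}(\theta_f)$ is strongly graded (so that the $\mathbb{Z}_2$-graded derived category matches the derived category of the degree-$0$ part), identifies each factor on the right with $\underline{\mcm}A/(f)$. Concatenating the three equivalences gives the claimed decomposition. There is no real obstacle beyond carefully checking that Proposition \ref{prop-copy} applies, which in turn rests on the nontrivial input that $C_{A^!}(\theta_f)$ is commutative for the noncommutative conics; this is taken from the preceding classification and is the only step that is not purely formal.
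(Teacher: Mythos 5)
Your argument is correct and is essentially the paper's own route: the identification $\underline{\mcm}(A/(f))^{\#}\cong D^b(\mathrm{mod}\,C_{A^!}(\theta_f))$ via Theorem \ref{thm-k}(ii) with $B=\mathbb{C}[x]$, the commutativity of $C_{A^!}(\theta_f)$ from the classification in \cite{HMM}, the splitting $C_{A^!}(\theta_f)\cong C_{A^!}(\theta_f)_0\times C_{A^!}(\theta_f)_0$ from Proposition \ref{prop-copy}, and the identification of each factor with $\underline{\mcm}A/(f)$ via Theorem \ref{thm-HY}(iii) are exactly the ingredients the paper assembles. No gaps.
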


\begin{example}
Let $A=\mathbb{C}\langle x,y,z\rangle/(yz+zy+x^2,zx+xz+y^2,xy+yx)$ and $f=3x^2+3y^2+4z^2$. Then
$$A^!=\mathbb{C}[x,y,z]/(yz-x^2,zx-y^2,z^2),$$
$$C_{A^!}(\theta_{f})=\mathbb{C}[x,y,z]/(yz-x^2+3,zx-y^2+3,z^2-4).$$
Since $(yz-x^2+3)-(zx-y^2+3)=0$, we have $(x-y)(x+y+z)=0$. One can compute that $\mathrm{Spec}(C_{A^!}(\theta_{f}))$ has $4$ points and $C_{A^!}(\theta_{f})$ is a product of $4$ commutative local rings.

Let $R=\mathbb{C}[z]/(z^2-4)$, then
$$C_{A^!}(\theta_{f})\cong R[x,y]/(yz-x^2+3,zx-y^2+3)\cong A'\times A''$$
where $A'=\mathbb{C}[x,y]/(2y-x^2+3,2x-y^2+3)$ and $A''=\mathbb{C}[x,y]/(-2y-x^2+3,-2x-y^2+3)$.

In $A'$, we have
$$y^4=(2x+3)^2=4x^2+12x+9=4(2y+3)+6(y^2-3)+9=6y^2+8y+3.$$
Hence, $y^4-6y^2-8y-3=(y+1)^3(y-3)=0$, which implies $A'\cong \mathbb{C}[u]/(u^3)\times \mathbb{C}$. A similar computation shows that $A'\cong A''$. Therefore, $C_{A^!}(\theta_{f})_0$ is also isomorphic to $\mathbb{C}[u]/(u^3)\times \mathbb{C}$ by Propsition \ref{prop-copy}.

By Remark \ref{rmk-2}, $\mathrm{rank}f$ must be 1. In fact,
$f=3x^2+3y^2+4z^2=(-x-y+2z)^2.$
\end{example}

\vspace{5mm}

\subsection*{Acknowledgments}
J.-W. He was supported by NSFC (No. 11971141). Y. Ye was supported by NSFC (No. 11971449).

\vspace{5mm}


\end{document}